\documentclass[12pt, twoside]{article}

\usepackage{amsmath,amsthm,amssymb}

\usepackage{times}

\usepackage{enumerate}

\pagestyle{myheadings}

\makeatletter

\def\author#1{\gdef\autrun{\def\and{\unskip, }#1}\gdef\@author{#1}}

\makeatother

\def\subjclass#1{{\renewcommand{\thefootnote}{}%

\footnote{\emph{Mathematics Subject Classification (2010):} #1}}}

 \newtheorem{theorem}{\sc Theorem}[section]
 \newtheorem{lemma}[theorem]{\sc Lemma}
 \newtheorem{cor}[theorem]{\sc Corollary}
 
 \newtheorem{conj}[theorem]{\sc Conjecture}

\frenchspacing

\textwidth=15cm

\textheight=23cm

\parindent=16pt

\oddsidemargin=-0.5cm

\evensidemargin=-0.5cm

\topmargin=-0.5cm

\begin{document}

\baselineskip=17pt

\author{Simeon Ball and Jan De Beule \thanks{The first author acknowledges the support of the project MTM2008-06620-C03-01 of the Spanish Ministry of Science and Education and the project 2009-SGR-01387 of the Catalan Research Council. The second author is a postdoctoral research fellow of the Research Foundation Flanders -- Belgium (FWO)}}

\title{On sets of vectors of a finite vector space in which every subset of basis size is a basis II\footnote{final version, to appear in Des. Codes Cryptogr.}}

\date{25 January 2012}

\maketitle


\subjclass{51E21, 15A03, 94B05, 05B35.}

\begin{abstract}
This article contains a proof of the MDS conjecture for $k \leq 2p-2$. That is, that if $S$ is a set of vectors of ${\mathbb F}_q^k$ in which every subset of $S$ of size $k$ is a basis, where $q=p^h$, $p$ is prime and $q$ is not and $k \leq 2p-2$, then $|S| \leq q+1$. It also contains a short proof of the same fact for $k\leq p$, for all $q$.
\end{abstract}

\section{Introduction}

Let $S$ be a set of vectors of ${\mathbb F}_q^k$ in which every subset of size $k$ is a basis.

In 1952, Bush \cite{Bush1952} showed that if $k \geq q$ then $|S| \leq k+1$ and the bound is attained if and only if $S$ is equivalent to $\{e_1,\ldots,e_k,e_1+\ldots+e_k\}$, where $\{e_1,\ldots,e_k\}$ is a basis.

The main conjecture for maximum distance separable codes (the MDS conjecture), proposed (as a question) by Segre \cite{Segre1955b} in 1955 is the following.

\begin{conj} \label{MDSc}
A set $S$ of vectors of the vector space ${\mathbb F}_q^k$, with the property that every subset of $S$ of size $k \leq q$ is a basis, has size at most $q+1$, unless $q$ is even and $k=3$ or $k=q-1$, in which case it has size at most $q+2$.
\end{conj}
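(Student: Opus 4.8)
Conjecture~\ref{MDSc} is open in full; I outline instead a plan to prove it in the range of the abstract, that is, for $k\le 2p-2$ with $q=p^h$ not prime (the case $k\le p$, all $q$, being a simplification of the same argument). Pass to finite geometry: a set $S$ as in the statement is an \emph{arc} of $\mathrm{PG}(k-1,q)$, a set of points no $k$ of which lie on a common hyperplane, and one must show $|S|\le q+1$ apart from the two stated exceptions. Dispose of the extreme ranges first: $k\le 2$ is immediate, $k=3$ with $q$ odd is Segre's theorem, the two exceptions ($q$ even with $k=3$ or $k=q-1$) are the hyperoval and its dual, and $k\ge q-1$ follows from Bush's theorem together with the MDS duality that turns an $n$-arc of $\mathrm{PG}(k-1,q)$ into an $n$-arc of $\mathrm{PG}(n-k-1,q)$. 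That same duality lets us assume, when ruling out a $(q+2)$-arc, that $k\le\tfrac12(q+2)$. So the target becomes: there is no $(q+2)$-arc of $\mathrm{PG}(k-1,q)$ for $3<k\le\min\{2p-2,\tfrac12(q+2)\}$.

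The tool is Segre's lemma of tangents in Ball's higher-dimensional form. Fix $A\subseteq S$ with $|A|=k-2$; each of the $q+1$ hyperplanes through the codimension-$2$ subspace $\langle A\rangle$ meets $S$ in $k-2$ or $k-1$ points, and a short count shows that exactly $t:=q+k-1-|S|$ of them are \emph{tangent}, meeting $S$ only in $A$ (so $t=k-3$ for a $(q+2)$-arc). Choosing coordinates so that $k$ points of $S$ are the standard basis and one more is $(1,\dots,1)$, one compares, along each such pencil, the product of the field elements cut on a fixed line by the tangent hyperplanes with the corresponding product over the secant hyperplanes; this yields a multiplicative identity linking the linear forms that define the tangent hyperplanes at the various $(k-2)$-subsets. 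Running this over the $(k-2)$-subsets that contain a fixed $(k-3)$-subset, the identities splice into a single homogeneous polynomial relation in $\mathbb{F}_q[X_1,\dots,X_k]$, from which one reads off a polynomial $F$ whose factors are tangent-hyperplane forms, whose degree is of order $tq$, and whose zero set must contain all but a bounded number of the points of $S$ (each tangent hyperplane avoids $S\setminus A$). Everything rests on $F$ not vanishing identically, and the coefficient that has to survive turns out to be a product of binomial coefficients; Lucas' theorem keeps it nonzero modulo $p$ in the regime $k\le 2p-2$, and this arithmetic fact, not any geometric one, is what confines the method.

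Finally one extracts a contradiction. The lemma of tangents is not one identity but many, and combining them forces $F$ to coincide, modulo the coordinate hyperplanes, with a polynomial of much smaller degree (in Segre's plane case, a conic). Counting the zeros of this smaller polynomial against the fact that $S$ is an arc — equivalently, in the plane case, using that the extra point of a $(q+2)$-arc would be forced onto all $q+1$ tangent lines of the conic, impossible for $q$ odd — gives an inequality forcing $t=k-3$ to be of order $q$, contradicting $k\le\tfrac12(q+2)$; the finitely many residual small $q$ are then checked by hand. Hence $|S|\le q+1$. The main obstacle is squarely the middle step: keeping the spliced polynomial $F$ simultaneously nonzero and of controlled degree is a delicate question about non-vanishing of binomial coefficients modulo $p$ in characteristic $p$, and it is precisely this that currently bars the argument from reaching past $k\le 2p-2$.
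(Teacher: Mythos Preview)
Your outline is not the paper's argument. What you sketch --- assembling the tangent-hyperplane forms into a single global polynomial $F$ of degree about $tq$, showing it agrees modulo coordinate hyperplanes with a low-degree ``envelope'' (a conic in Segre's plane case), and then counting zeros to force $t$ large --- is the classical Segre/Thas/Voloch/Hirschfeld--Korchm\'aros line via the associated algebraic hypersurface of an arc. That route, as far as is known, does \emph{not} reach $k\le 2p-2$: the best bounds it produces are of order $\sqrt{q}$ or $\sqrt{pq}$, exactly as the paper's introduction records. Your assertion that a surviving coefficient is a product of binomial coefficients kept nonzero by Lucas' theorem precisely for $k\le 2p-2$ is not substantiated --- no such $F$ with the stated properties is actually constructed for general $k$ --- and the final step (``$t=k-3$ forced to be of order $q$'') is a plane heuristic with no known higher-dimensional analogue yielding this bound.

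The paper's mechanism is structurally different and the characteristic enters at a different point. It defines a \emph{Segre product} $P_D(A,B)$, an alternating product of ratios of tangent functions, and proves by induction on $r$ (Lemma~3.1) an identity between two weighted sums of such products times inverse determinants. The inductive step divides by the integer $r-n$ in $\mathbb{F}_p$, and that alone is the $p$-adic constraint: one needs $r-n\le p-1$. Taking $n=0$, $r=t+2=k-1$ (hence $k\le p$) collapses the identity to $0=\prod_{z\in\Omega}\det(z,L)^{-1}$, an immediate contradiction --- no envelope polynomial, no zero count. For $p<k\le 2p-2$ a second induction (Lemma~4.1, applied with $n=m=k-p$) produces a $2^{m}$-term alternating sum; a Laplace-expansion trick (Lemma~5.1) is then iterated $n$ times to annihilate every term except the one with $\tau=\emptyset$, which is again a nonzero product of determinants equated to zero. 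Throughout, the restriction on $k$ is simply that a single integer at most $p-1$ be invertible in characteristic $p$, not a Lucas-type statement about binomial coefficients.
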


In this article we shall prove the conjecture for all $k \leq 2p-2$, where $q=p^h$, $p$ is prime and $q$ is not prime.

We shall also prove the conjecture for $q$ prime, which was first proven in \cite{Ball2011}. It may help the reader to look at the first four sections of \cite{Ball2011}, although this article is self-contained (with the exception of the proof of Lemma~\ref{segre}) and can be read independently. The proof here is based on the ideas of \cite{Ball2011} which themselves are based on the initial idea of Segre in \cite{Segre1955a}.

For a complete list of when the conjecture is known to hold for $q$ non-prime, see \cite{HS2001} and also \cite{HT1991}. 

The best known bounds, up to first-order of magnitude ($c_i$ are constants), are that for $q$ an odd non-square, the conjecture holds for $k < \sqrt{pq}/4+c_1p$, Voloch \cite{Voloch1991}. For $q=p^{2h}$, where $p \geq 5$ is a prime, the conjecture holds for $k \leq \sqrt{q}/2+c_2$, Hirschfeld and Korchm\'aros \cite{HK1996}, and here we shall prove the conjecture for $k \leq 2\sqrt{q}+c_3$ in the case $q=p^2$. The conjecture is known to hold for all $q \leq 27$ and for all $k\leq 5$ and $k=6$ with some exceptions.

Conjecture~\ref{MDSc} has implications for various problems in combinatorics, most notably for maximum distance separable codes (whence the name) from coding theory and the uniform matroid from matroid theory. 

A {\em linear maximum distance separable code} is a linear code of length $n$, dimension $k$ and minimum distance $d$ over ${\mathbb F}_q$, for which $d=n-k+1$. Conjecture~\ref{MDSc} implies that a linear maximum distance separable code has length $n$ at most $q+1$ unless $q$ is even and $k=3$ or $k=q-1$, in which case it has length at most $q+2$. For more details on codes and MDS codes in particular, see \cite{MS1977}. 

A {\em matroid} $M=(E,F)$ is a pair in which $E$ is a set and $F$ is a set of subsets of $E$, called {\em independent sets}, such that (1) every subset of an independent set is an independent subset; and (2) for all $A \subseteq E$, all maximal independent subsets of $A$ have the same cardinality, called the {\em rank} of $A$ and denoted $r(A)$. The maximal independent sets of the {\em uniform matroid} of rank $r$ are all the $r$ element subsets of the set $E$. Conjecture~\ref{MDSc} implies that the uniform matroid of rank $r$, with $|E| \geq r+2$, is representable over ${\mathbb F}_q$ if and only if $|E| \leq q+1$, unless $q$ is even and $r=3$ or $r=q-1$, in which case it is if and only if $|E| \leq q+2$.
For more details on matroids and representations of matroids in particular, see \cite{Oxley1992}.

\section{The tangent function and the Segre product} \label{lemmas}

For any subset $Y$ of $k-2$ elements of $S$, since there are at most $k-1$ vectors of $S$ in a hyperplane, there are exactly 
$$t=q+1-(|S|-k+2)=q+k-1-|S|$$
hyperplanes containing $Y$ and no other vector of $S$. 

We shall assume throughout that $t \geq 1$, which is no restriction since we are trying to prove $|S| \leq q+1$ for $k\geq 4$.

Let $\phi_Y$ be a set of $t$ linearly independent linear maps from ${\mathbb F}_q^k$ to ${\mathbb F}_q$ with the property that for each $\alpha \in \phi_Y$, $Ker(\alpha)$ is one of the $t$ hyperplanes containing $Y$ and no other vector of $S$.

The {\em tangent function at $Y$} is defined (up to scalar factor) as
$$
T_Y(x) =\prod_{\alpha \in \phi_Y} \alpha(x),
$$
and is a map from ${\mathbb F}_q^k$ to ${\mathbb F}_q$.

The following is a coordinate-free version of Segre's lemma of tangents \cite{Segre1967} and is from \cite{Ball2011}.

\begin{lemma}  \label{segre}
Let $D$ be a set of $k-3$ elements of $S$. For all $x,y,z \in S \setminus D$
$$
T_{\{ x \} \cup D}(y)T_{\{ y \} \cup D}(z)T_{\{ z\}\cup D}(x)=(-1)^{t+1}T_{\{ x\}\cup D}(z)T_{\{ y\}\cup D}(x)T_{\{ z\}\cup D}(y).
$$
\end{lemma}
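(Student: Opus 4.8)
I would prove this by quotienting out $D$ and reducing the statement to Segre's classical counting argument in the projective plane.

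First dispose of the degenerate cases: if two of $x,y,z$ coincide, say $x=y$, then every $\alpha\in\phi_{\{x\}\cup D}$ kills $x$, so $T_{\{x\}\cup D}(x)=0$ and both sides vanish; so I may assume $x,y,z$ distinct. Since $D$ is contained in a basis it is linearly independent, so $U:=\langle D\rangle$ has dimension $k-3$; let $\pi\colon{\mathbb F}_q^k\to V:={\mathbb F}_q^k/U$ be the quotient onto a $3$-dimensional space and write $\bar v=\pi(v)$. If three distinct vectors $s_1,s_2,s_3\in S\setminus D$ had images lying in a common $2$-dimensional subspace $W$ of $V$, then $\pi^{-1}(W)$ would be a hyperplane of ${\mathbb F}_q^k$ containing the $k$-element set $D\cup\{s_1,s_2,s_3\}$ of vectors of $S$, contradicting that this set is a basis. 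Hence $\bar S:=\{\bar s: s\in S\setminus D\}$ consists of $|S|-k+3$ distinct points, no three collinear, i.e.\ an arc of $\mathrm{PG}(V)\cong\mathrm{PG}(2,q)$.

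Next transport the tangent function. Hyperplanes of ${\mathbb F}_q^k$ through $U$ correspond to lines of $\mathrm{PG}(V)$, such a hyperplane containing $s\in S\setminus D$ iff the line contains $\bar s$; so the $t$ hyperplanes through $\{x\}\cup D$ that avoid the other vectors of $S$ correspond to the tangent lines of $\bar S$ at $\bar x$, of which there are $q+1-(|\bar S|-1)=q+k-1-|S|=t$, consistent with $|\phi_{\{x\}\cup D}|=t$. Each $\alpha\in\phi_{\{x\}\cup D}$ vanishes on $U$, hence $\alpha=\bar\alpha\circ\pi$ for a unique linear $\bar\alpha\colon V\to{\mathbb F}_q$, and $T_{\{x\}\cup D}(w)=\prod_{\alpha}\bar\alpha(\bar w)$ for all $w$. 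Writing $F_{\bar x}(\bar w):=\prod_{\alpha\in\phi_{\{x\}\cup D}}\bar\alpha(\bar w)$ — a nonzero scalar when $\bar w\in\bar S\setminus\{\bar x\}$, since $\bar w$ then lies off every tangent at $\bar x$ — the lemma becomes the planar identity
\[
F_{\bar x}(\bar y)F_{\bar y}(\bar z)F_{\bar z}(\bar x)=(-1)^{t+1}F_{\bar x}(\bar z)F_{\bar y}(\bar x)F_{\bar z}(\bar y).
\]

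Now run Segre's argument. Choose representatives so that $\bar x,\bar y,\bar z$ are the standard basis $e_1,e_2,e_3$ of $V$. A tangent at $\bar x$ has equation $\bar\alpha=0$ with $\bar\alpha=(0,a,b)$, meeting the side $\langle e_2,e_3\rangle$ in $(0,b,-a)$; with the coordinate $\lambda=(\text{third entry})/(\text{second entry})$ on that side (so $\bar y\leftrightarrow 0$, $\bar z\leftrightarrow\infty$) this foot has $\lambda=-a/b=-\bar\alpha(\bar y)/\bar\alpha(\bar z)$. Since $\bar S$ is an arc, the $q+1$ points of $\langle e_2,e_3\rangle$ split into the $t$ feet of tangents at $\bar x$ and the feet $\langle e_2,e_3\rangle\cap\langle e_1,\bar P\rangle$ of the secants from $\bar x$ ($\bar P\in\bar S\setminus\{\bar x\}$), the latter including $\bar y$ ($\lambda=0$), $\bar z$ ($\lambda=\infty$) and, for $\bar P=(p_1,p_2,p_3)\in\bar S\setminus\{\bar x,\bar y,\bar z\}$ (so all $p_i\neq 0$), the point $(0,p_2,p_3)$ with $\lambda=p_3/p_2$. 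Multiplying $\lambda$ over the $q-1$ points with $\lambda\in{\mathbb F}_q^{\ast}$ and using $\prod_{\lambda\in{\mathbb F}_q^{\ast}}\lambda=-1$ (valid for every $q$) gives
\[
-1=\Big(\prod_{\alpha}\big(-\bar\alpha(\bar y)/\bar\alpha(\bar z)\big)\Big)\prod_{\bar P\in\bar S\setminus\{\bar x,\bar y,\bar z\}}\frac{p_3}{p_2},
\]
hence $F_{\bar x}(\bar y)/F_{\bar x}(\bar z)=(-1)^{t+1}\big/\prod_{\bar P}(p_3/p_2)$. The same computation cyclically on $\langle e_3,e_1\rangle$ and $\langle e_1,e_2\rangle$ yields $F_{\bar y}(\bar z)/F_{\bar y}(\bar x)=(-1)^{t+1}\big/\prod_{\bar P}(p_1/p_3)$ and $F_{\bar z}(\bar x)/F_{\bar z}(\bar y)=(-1)^{t+1}\big/\prod_{\bar P}(p_2/p_1)$. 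Multiplying the three, the products over $\bar P$ cancel because $(p_3/p_2)(p_1/p_3)(p_2/p_1)=1$, leaving $(-1)^{3(t+1)}=(-1)^{t+1}$, which is exactly the displayed planar identity — and hence the lemma.

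The only delicate point is the sign and coordinate bookkeeping: one must fix the three affine coordinates on the sides of the reference triangle compatibly under the rotation $x\to y\to z$, so that the three factors $(-1)^{t+1}$, the three instances of $\prod_{\lambda\in{\mathbb F}_q^{\ast}}\lambda=-1$, and the residual products over $\bar S\setminus\{\bar x,\bar y,\bar z\}$ assemble correctly. Everything else — the arc property of $\bar S$, the tangent/secant dichotomy through a point of an arc, and the nonvanishing of the functionals involved — is immediate from the hypothesis that every $k$ vectors of $S$ form a basis.
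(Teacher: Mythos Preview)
The paper does not give its own proof of this lemma: it is quoted from \cite{Ball2011} as a coordinate-free version of Segre's lemma of tangents \cite{Segre1967}, and the introduction explicitly says the article is self-contained \emph{with the exception of the proof of Lemma~\ref{segre}}. Your argument is correct and is exactly the classical Segre proof --- quotient by $\langle D\rangle$ to reduce to an arc in $\mathrm{PG}(2,q)$, then take the product of affine coordinates along each side of the reference triangle using $\prod_{\lambda\in\mathbb{F}_q^{*}}\lambda=-1$ and let the secant contributions cancel cyclically --- which is precisely the argument carried out in the cited references, so there is nothing further to compare.
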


Since we wish to write $\det(A)$ where $A=\{a_1,\ldots,a_k\}$ is a subset of $S$, to mean the determinant $\det(a_1,\ldots,a_k)$, we order the elements of $S$ from now on. We write $\det(A_1,\ldots,A_r)$ to mean the determinant in which the elements of $A_1$ come first, then the elements of $A_2$, etc.

The following, which follows from interpolating the tangent function, is also from \cite{Ball2011}.
\begin{lemma} \label{interpolation}
If $|S| \geq k+t>k$ then for any $Y$ of size $k-2$ and $E$ of size $t+2$, disjoint subsets of $S$, 
$$
0=\sum_{a \in E}T_{ Y}(a) \prod_{z \in E \setminus \{ a \}} \det (z,a,Y)^{-1}.$$
\end{lemma}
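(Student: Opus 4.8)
The plan is to recognise the right-hand side as the classical ``one evaluation too many'' linear dependence among the values of a binary form, once the tangent function is pushed down to the $2$-dimensional quotient ${\mathbb F}_q^k/\langle Y\rangle$.

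First I would set up the reduction. Each $\alpha\in\phi_Y$ vanishes on $\langle Y\rangle$, so it factors through the quotient map $\pi:{\mathbb F}_q^k\to V:={\mathbb F}_q^k/\langle Y\rangle$, a space of dimension $2$; hence $T_Y=f\circ\pi$ for the binary form $f$ of degree $t$ obtained as the product of the $t$ induced (nonzero) linear forms on $V$. Choosing a basis of $V$, write $[u,v]$ for the $2\times 2$ determinant of $\pi(u),\pi(v)$ in that basis. Subtracting from the first two columns of $\det(u,v,Y)$ the appropriate combinations of the columns indexed by $Y$ shows
$$\det(u,v,Y)=\lambda\,[u,v]\qquad\text{for all }u,v\in{\mathbb F}_q^k,$$
where $\lambda=\det(w_1,w_2,Y)\neq 0$ for any fixed $w_1,w_2$ projecting to the chosen basis of $V$. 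Since $Y\cup\{a,b\}$ is a basis for any two distinct $a,b\in E$, the vectors $\pi(a)$ ($a\in E$) are pairwise independent in $V$, i.e.\ $t+2$ distinct points of the projective line over ${\mathbb F}_q$; this is consistent with $|S|\geq k+t$, which forces $t\leq(q-1)/2$ and in particular $t+2\leq q+1$, so there is room to interpolate.

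Next comes the interpolation itself. Fix an ordering $E=\{a_0,\dots,a_{t+1}\}$. Both sides of
$$f(x)=\sum_{i=0}^{t}f(\pi(a_i))\prod_{\substack{0\leq j\leq t\\ j\neq i}}\frac{[a_j,x]}{[a_j,a_i]}$$
are binary forms of degree at most $t$ in $x$ agreeing at the $t+1$ distinct points $\pi(a_0),\dots,\pi(a_t)$, hence are identically equal. Evaluating at $x=\pi(a_{t+1})$, using $[a_j,a_{t+1}]=-[a_{t+1},a_j]$ and the bookkeeping identity $\prod_{z\in E\setminus\{a_i\}}[z,a_i]=[a_{t+1},a_i]\prod_{j\neq i,\,j\leq t}[a_j,a_i]$ (together with its analogue for $i=t+1$), the isolated term $f(\pi(a_{t+1}))$ collapses into the sum and one obtains
$$0=\sum_{a\in E}f(\pi(a))\prod_{z\in E\setminus\{a\}}[z,a]^{-1}.$$
Substituting $f(\pi(a))=T_Y(a)$ and $[z,a]=\lambda^{-1}\det(z,a,Y)$ and cancelling the common nonzero factor $\lambda^{t+1}$ yields the lemma; the scalar ambiguity in $T_Y$ is irrelevant since the whole expression equals $0$.

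The only real content is the reduction in the second paragraph — verifying that $T_Y$ descends to a binary form on the quotient plane and that the $k\times k$ determinants $\det(z,a,Y)$ are, up to one global scalar $\lambda$, the $2\times 2$ determinants there. After that the identity is routine; the one place I would be careful is the sign bookkeeping when symmetrising the Lagrange formula, since the antisymmetry $[z,a]=-[a,z]$ must combine exactly so as to absorb the $a_{t+1}$ term into the symmetric sum.
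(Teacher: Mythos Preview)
Your proposal is correct and is precisely the argument the paper points to: the paper does not prove this lemma here but cites \cite{Ball2011} with the remark that it ``follows from interpolating the tangent function,'' and your proof carries out exactly that interpolation after passing to the two-dimensional quotient ${\mathbb F}_q^k/\langle Y\rangle$. One small comment: the bound $t+2\leq q+1$ is automatic from having $t+2$ pairwise independent vectors in a $2$-space, so the detour through $|S|\geq k+t\Rightarrow t\leq (q-1)/2$ is not needed (though it is not wrong).
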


Let $A=( a_1,\ldots,a_n )$ and $B=( b_0,\ldots,b_{n-1} )$ be two subsequences of $S$ of the same length $n$ and let $D$ be a subset of $S\setminus (A\cup B)$ of size $k-n-1$. 

We define the {\em Segre product of $A$ and $B$ with base $D$} to be
$$
P_D(A,B)=\prod_{i=1}^n \frac{T_{D \cup \{a_1,\ldots,a_{i-1},b_i,\ldots,b_{n-1} \}} (a_i)}{T_{D \cup \{a_1,\ldots,a_{i-1},b_i,\ldots,b_{n-1} \}} (b_{i-1})}
$$
and $P_D(\emptyset,\emptyset)=1$.

The following lemmas are a consequence of Lemma~\ref{segre}.

\begin{lemma} \label{numerator}
$$P_D(A^*,B)=(-1)^{t+1}P_D(A,B),$$
where the sequence $A^*$ is obtained from $A$ by interchanging two elements.
\end{lemma}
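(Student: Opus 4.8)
The plan is to prove Lemma~\ref{numerator} by reducing the general transposition to the case of swapping two \emph{adjacent} elements of $A$, and then to handle that adjacent case by a direct application of Segre's lemma of tangents (Lemma~\ref{segre}). First I would observe that any transposition of two elements $a_r$ and $a_s$ of $A$ (with $r<s$) can be written as a product of an odd number of adjacent transpositions, so if each adjacent transposition contributes a factor $(-1)^{t+1}$ to $P_D(A,B)$, the composite contributes $((-1)^{t+1})^{\mathrm{odd}}=(-1)^{t+1}$, which is exactly the claimed identity. (One must be slightly careful that the intermediate sequences still have $A^*$, $B$ and $D$ pairwise disjoint subsequences of $S$, but since we only permute within $A$ this is immediate, and since $(-1)^{t+1}$ squares to $1$ the parity bookkeeping is trivial.)

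So the heart of the matter is: if $A'$ is obtained from $A=(a_1,\dots,a_n)$ by swapping $a_i$ and $a_{i+1}$, show $P_D(A',B)=(-1)^{t+1}P_D(A,B)$. Here I would write out the defining product for $P_D(A,B)$ and note that all factors with index $j\notin\{i,i+1\}$ are literally unchanged, because the index sets $D\cup\{a_1,\dots,a_{j-1},b_j,\dots,b_{n-1}\}$ do not involve the relative order of $a_i$ and $a_{i+1}$ for such $j$: for $j<i$ neither appears, and for $j>i+1$ both $a_i$ and $a_{i+1}$ appear together in $\{a_1,\dots,a_{j-1}\}$, so the set is the same. Hence only the two factors indexed $i$ and $i+1$ are affected. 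Writing $D'=D\cup\{a_1,\dots,a_{i-1},b_{i+1},\dots,b_{n-1}\}$ (the "frozen" part), the relevant factor of $P_D(A,B)$ is
$$
\frac{T_{D'\cup\{b_i\}}(a_i)}{T_{D'\cup\{b_i\}}(b_{i-1})}\cdot\frac{T_{D'\cup\{a_i\}}(a_{i+1})}{T_{D'\cup\{a_i\}}(b_i)},
$$
while the corresponding factor of $P_D(A',B)$ is the same expression with $a_i$ and $a_{i+1}$ interchanged,
$$
\frac{T_{D'\cup\{b_i\}}(a_{i+1})}{T_{D'\cup\{b_i\}}(b_{i-1})}\cdot\frac{T_{D'\cup\{a_{i+1}\}}(a_i)}{T_{D'\cup\{a_{i+1}\}}(b_i)}.
$$

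Taking the ratio of the $P_D(A',B)$ factor to the $P_D(A,B)$ factor, the two denominators $T_{D'\cup\{b_i\}}(b_{i-1})$ cancel, and what remains to prove is
$$
T_{D'\cup\{b_i\}}(a_{i+1})\,T_{D'\cup\{a_{i+1}\}}(a_i)\,T_{D'\cup\{a_i\}}(b_i)=(-1)^{t+1}\,T_{D'\cup\{b_i\}}(a_i)\,T_{D'\cup\{a_i\}}(a_{i+1})\,T_{D'\cup\{a_{i+1}\}}(b_i).
$$
This is precisely Lemma~\ref{segre} applied with the base $D$ of that lemma taken to be $D'$ — which has size $(k-n-1)+(i-1)+(n-1-i)=k-3$, exactly as required — and with $\{x,y,z\}=\{a_i,a_{i+1},b_i\}$: matching $x=b_i$, $y=a_i$, $z=a_{i+1}$ turns the left side into $T_{\{b_i\}\cup D'}(a_{i+1})\,T_{\{a_{i+1}\}\cup D'}(a_i)\cdots$ wait — one reads off $x=a_{i+1},y=a_i,z=b_i$ or the appropriate cyclic labelling so that the three tangent triples line up; in any case the cyclic symmetry of the statement of Lemma~\ref{segre} guarantees one of the six labellings works, and $b_i,a_i,a_{i+1}\in S\setminus D'$ since $A$, $B$, $D$ are pairwise disjoint. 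The main obstacle is purely bookkeeping: being careful that the "frozen" index set $D'$ really does have size $k-3$ and really is disjoint from $\{a_i,a_{i+1},b_i\}$, and correctly identifying the three factors of the Segre product that change — everything else is an immediate cancellation and an invocation of Lemma~\ref{segre}.
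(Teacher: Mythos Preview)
Your proposal is correct and follows essentially the same approach as the paper: reduce to an adjacent transposition (the paper notes $(j\ \ell)$ is a product of $2(\ell-j)+1$ adjacent transpositions), observe that only the $i$-th and $(i+1)$-th factors change, and then apply Lemma~\ref{segre} with base set $D'=D\cup\{a_1,\dots,a_{i-1},b_{i+1},\dots,b_{n-1}\}$ and triple $\{a_i,a_{i+1},b_i\}$. The specific labelling that makes the identity line up is $x=b_i$, $y=a_{i+1}$, $z=a_i$; with that choice both sides of Lemma~\ref{segre} match your displayed equation exactly, so the hedging in your final paragraph can be removed.
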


\begin{proof}
It is enough to prove the lemma for two adjacent elements in $A$ since the transposition $(j \ \ell)$ can be written as the product of $2(\ell-j)+1$ transpositions of the form $(n \ n+1)$. 

The only terms in the Segre product which differ when we interchange $a_j$ and $a_{j+1}$ are the terms in the product for $i=j$ and $i=j+1$. Trivially
$$
\frac{T_{D \cup \{a_1,\ldots,a_{j-1},b_j,b_{j+1},\ldots,b_{n-1} \}} (a_j)}{T_{D \cup \{a_1,\ldots,a_{j-1},b_j,b_{j+1},\ldots,b_{n-1} \}} (b_{j-1})}
\frac{T_{D \cup \{a_1,\ldots,a_{j-1},a_j,b_{j+1},\ldots,b_{n-1} \}} (a_{j+1})}{T_{D \cup \{a_1,\ldots,a_{j-1},a_j,b_{j+1},\ldots,b_{n-1} \}} (b_{j})}
$$
is equal to
$$\frac{T_{\Delta \cup \{ b_j \}}(a_j)}{T_{\Delta \cup \{ b_j \}}(b_{j-1})}\frac{T_{\Delta \cup \{ a_j \}}(a_{j+1})}{T_{\Delta \cup \{ a_j \}}(b_{j})},
$$
where $\Delta=D \cup \{a_1,\ldots,a_{j-1},b_{j+1},\ldots,b_{n-1} \}$, which is equal to
$$
(-1)^{t+1} \frac{T_{\Delta \cup \{ b_j \}}(a_{j+1})}{T_{\Delta \cup \{ b_j \}}(b_{j-1})}\frac{T_{\Delta \cup \{ a_{j+1} \}}(a_{j})}{T_{\Delta \cup \{ a_{j+1} \}}(b_{j})},
$$
by Lemma~\ref{segre}.
\end{proof}

In the same way the following lemma also holds.

\begin{lemma} \label{denominator}
$$P_D(A,B^*)=(-1)^{t+1}P_D(A,B),$$
where the sequence $B^*$ is obtained from $B$ by interchanging two elements.
\end{lemma}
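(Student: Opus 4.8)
The plan is to run the proof of Lemma~\ref{numerator} with the roles of $A$ and $B$ interchanged. As there, it suffices to treat the case in which $B^*$ differs from $B$ by a transposition of two \emph{adjacent} entries, say $b_{j-1}$ and $b_j$ with $1\le j\le n-1$, since an arbitrary transposition of entries of $B$ is a product of an odd number of adjacent ones and $(-1)^{t+1}$ is its own inverse.

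First I would determine which factors of
$$
P_D(A,B)=\prod_{i=1}^n \frac{T_{D\cup\{a_1,\ldots,a_{i-1},b_i,\ldots,b_{n-1}\}}(a_i)}{T_{D\cup\{a_1,\ldots,a_{i-1},b_i,\ldots,b_{n-1}\}}(b_{i-1})}
$$
are affected by swapping $b_{j-1}$ and $b_j$. In the $i$-th factor, $b_{j-1}$ appears in the base set when $i\le j-1$ and as the argument of the denominator when $i=j$, while $b_j$ appears in the base set when $i\le j$ and as the denominator argument when $i=j+1$. Hence for $i\le j-1$ the base set contains both $b_{j-1}$ and $b_j$ and is unchanged, and for $i\ge j+2$ neither element occurs; so only the factors indexed by $j$ and $j+1$ change.

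Writing $\Delta=D\cup\{a_1,\ldots,a_{j-1},b_{j+1},\ldots,b_{n-1}\}$, a count gives $|\Delta|=(k-n-1)+(j-1)+(n-1-j)=k-3$, so Lemma~\ref{segre} applies to the base $\Delta$. The product of the $j$-th and $(j+1)$-th factors of $P_D(A,B)$ is
$$
\frac{T_{\Delta\cup\{b_j\}}(a_j)}{T_{\Delta\cup\{b_j\}}(b_{j-1})}\cdot\frac{T_{\Delta\cup\{a_j\}}(a_{j+1})}{T_{\Delta\cup\{a_j\}}(b_j)},
$$
whereas for $P_D(A,B^*)$ it is
$$
\frac{T_{\Delta\cup\{b_{j-1}\}}(a_j)}{T_{\Delta\cup\{b_{j-1}\}}(b_j)}\cdot\frac{T_{\Delta\cup\{a_j\}}(a_{j+1})}{T_{\Delta\cup\{a_j\}}(b_{j-1})}.
$$
In the ratio of the second expression to the first, the factor $T_{\Delta\cup\{a_j\}}(a_{j+1})$ cancels, and the six remaining tangent values are exactly the two sides of the identity in Lemma~\ref{segre} applied with $x=a_j$, $y=b_{j-1}$, $z=b_j$; this shows the ratio equals $(-1)^{t+1}$. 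Since all the other factors are unchanged, multiplying them back in yields $P_D(A,B^*)=(-1)^{t+1}P_D(A,B)$.

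The only point requiring care — and it is bookkeeping rather than a genuine obstacle — is tracking which occurrences of $b_{j-1}$ and $b_j$ sit in base sets versus in denominator arguments, and then matching the six surviving tangent values to the correct sides of Lemma~\ref{segre} in the right orientation; everything else is the formal mirror image of the proof of Lemma~\ref{numerator}.
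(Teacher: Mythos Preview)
Your proof is correct and is precisely the argument the paper has in mind: the paper's own ``proof'' of this lemma is just the sentence ``In the same way the following lemma also holds,'' referring back to the proof of Lemma~\ref{numerator}, and you have written out exactly that mirror-image computation. The identification of the two affected factors, the size count for $\Delta$, and the application of Lemma~\ref{segre} with $x=a_j$, $y=b_{j-1}$, $z=b_j$ are all right.
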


The following lemma will also be needed.

\begin{lemma} \label{switch}
If $A$ and $B$ are subsequences of $S$ and $|A|=|B|-1$ then
$$
\frac{T_{D \cup B} (y)}{T_{D \cup B} (x)} P_{D \cup \{y \}} ( \{ x \} \cup A,B)=(-1)^{t+1} P_{D \cup \{x \}} ( \{ y \} \cup A,B).
$$
\end{lemma}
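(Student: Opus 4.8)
The plan is to expand the two Segre products factor by factor and exhibit a near-total cancellation. Write $A=(a_1,\dots,a_n)$ and $B=(b_0,b_1,\dots,b_n)$, so that $\{x\}\cup A=(x,a_1,\dots,a_n)$ and $\{y\}\cup A=(y,a_1,\dots,a_n)$ both have length $n+1$, matching that of $B$. Unravelling the definition of $P_{D\cup\{y\}}(\{x\}\cup A,B)$, its factor of index $i\geq 2$ has base set $D\cup\{y\}\cup\{x,a_1,\dots,a_{i-2},b_i,\dots,b_n\}$ and equals the value of the corresponding tangent function at $a_{i-1}$ divided by its value at $b_{i-1}$; the factor of index $i\geq 2$ of $P_{D\cup\{x\}}(\{y\}\cup A,B)$ is the same expression with the roles of $x$ and $y$ interchanged inside the base set. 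But $D\cup\{y\}\cup\{x,\dots\}=D\cup\{x\}\cup\{y,\dots\}$ as \emph{sets}, so these two factors are literally equal, and all the $i\geq 2$ factors cancel when one forms the quotient of the two products.

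Next I would collect what survives. Put $D'=D\cup\{b_1,\dots,b_n\}$, a subset of $S$ of size $k-3$ (which forces $|D|=k-n-3$). The only factors that do not cancel are those of index $i=1$, and they give
$$
\frac{P_{D\cup\{y\}}(\{x\}\cup A,B)}{P_{D\cup\{x\}}(\{y\}\cup A,B)}=\frac{T_{\{y\}\cup D'}(x)\,T_{\{x\}\cup D'}(b_0)}{T_{\{y\}\cup D'}(b_0)\,T_{\{x\}\cup D'}(y)} ,
$$
where every tangent value is nonzero because $T_Z(a)\neq 0$ whenever $a\in S\setminus Z$. Moreover $D\cup B=\{b_0\}\cup D'$, so the prefactor in the statement is $T_{D\cup B}(y)/T_{D\cup B}(x)=T_{\{b_0\}\cup D'}(y)/T_{\{b_0\}\cup D'}(x)$.

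The final step is a single application of Lemma~\ref{segre} with base $D'$ (of size $k-3$, as required) and the triple $x,y,b_0\in S\setminus D'$: it yields
$$
T_{\{x\}\cup D'}(y)\,T_{\{y\}\cup D'}(b_0)\,T_{\{b_0\}\cup D'}(x)=(-1)^{t+1}\,T_{\{x\}\cup D'}(b_0)\,T_{\{y\}\cup D'}(x)\,T_{\{b_0\}\cup D'}(y).
$$
Rearranging this identity shows that the quotient of the two Segre products equals $(-1)^{t+1}\,T_{\{b_0\}\cup D'}(x)/T_{\{b_0\}\cup D'}(y)$; multiplying by the prefactor $T_{\{b_0\}\cup D'}(y)/T_{\{b_0\}\cup D'}(x)$ cancels the leftover tangent ratio and leaves exactly $(-1)^{t+1}$, which is the claimed identity after clearing denominators.

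I do not anticipate a genuine obstacle: the whole argument is the observation that $D\cup\{x,y\}$ is symmetric in $x$ and $y$, followed by bookkeeping. The one thing to watch is the indexing — the entries of $A$ and $B$ are offset by one, and prepending $x$ or $y$ shifts $A$ by a further step — together with checking that every base set that appears has size $k-2$ (for a tangent function) or $k-3$ (for the invocation of Lemma~\ref{segre}), so that each symbol is defined and each tangent value is nonzero.
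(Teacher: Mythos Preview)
Your proof is correct and is essentially the paper's own argument: both peel off the $i=1$ factor, observe that the remaining factors (which the paper packages as $P_{D\cup\{x,y\}}(A,B\setminus\{b_0\})$) are symmetric in $x$ and $y$, and then finish with a single application of Lemma~\ref{segre} at the base $D'=D\cup(B\setminus\{b_0\})$. The only difference is cosmetic---you take the quotient of the two Segre products and cancel, whereas the paper factors each product and matches the pieces.
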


\begin{proof}
Using the definition of the Segre product and Lemma~\ref{segre},
$$
\frac{T_{D \cup B} (y)}{T_{D \cup B} (x)} P_{D \cup \{y \}} ( \{ x \} \cup A,B)=
\frac{T_{D \cup B} (y)}{T_{D \cup B} (x)}
\frac{T_{D \cup \{ b_1,\ldots, b_{n-1},y \} } (x)}{T_{D \cup \{ b_1,\ldots, b_{n-1},y \} } (b_0)} P_{D \cup \{ x,y \}} ( A,B \setminus \{ b_0 \} )
$$
$$
=(-1)^{t+1}\frac{T_{D \cup \{ b_1,\ldots, b_{n-1},x \} } (y)}{T_{D \cup \{ b_1,\ldots, b_{n-1},x \} } (b_0)} P_{D \cup \{ x,y \}} ( A,B \setminus \{ b_0 \} )=(-1)^{t+1} P_{D \cup \{x \}} ( \{ y \} \cup A,B).
$$
\end{proof}

\section{The main lemma}

For any subset $B$ of an ordered set $L$, let $\sigma(B,L)$ be $(t+1)$ times the number of transpositions needed to order $L$ so that the elements of $B$ are the last $|B|$ elements.

\begin{lemma}  \label{basic}
Let $A$ of size $n$, $L$ of size $r$, $D$ of size $k-1-r$ and $\Omega$ of size $t+1-n$ be  pairwise disjoint subsequences of $S$. If $n \leq r \leq n+p-1$ and $r\leq t+2$, where $q=p^h$, then
$$
\sum_{\substack{B \subseteq L \\ |B|=n}} (-1)^{\sigma(B,L)} P_{D\cup (L \setminus B)}(A,B) \prod_{z \in \Omega \cup B} \det(z,A,L\setminus B,D)^{-1}=
$$
$$
(-1)^{(r-n)(nt+n+1)} \sum_{\substack{\Delta \subseteq \Omega \\ |\Delta|=r-n}}  P_D(A \cup \Delta,L) \prod_{z \in (\Omega \setminus \Delta)\cup L} \det(z,A,\Delta,D)^{-1}.
$$
\end{lemma}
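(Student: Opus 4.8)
The plan is to prove the identity by induction on $r-n$. When $r=n$ each side reduces to a single term: on the left only $B=L$ is admissible, with $\sigma(L,L)=0$ and $L\setminus B=\emptyset$, and on the right only $\Delta=\emptyset$ occurs, with $(-1)^{(r-n)(nt+n+1)}=1$; both terms equal $P_D(A,L)\prod_{z\in\Omega\cup L}\det(z,A,D)^{-1}$, so the base case holds.

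For the inductive step the elementary device is the \emph{peeling identity}, obtained by separating off the $i=1$ factor in the definition of the Segre product:
$$
P_D(A,B)=\frac{T_{D\cup(B\setminus\{b_0\})}(a_1)}{T_{D\cup(B\setminus\{b_0\})}(b_0)}\,P_{D\cup\{a_1\}}\bigl(A\setminus\{a_1\},\,B\setminus\{b_0\}\bigr),
$$
which absorbs the leading entry of the first sequence into the base at the cost of one explicit tangent ratio; Lemmas~\ref{numerator} and~\ref{denominator} let me bring any prescribed entries of the two sequences to the front beforehand, each transposition contributing $(-1)^{t+1}$, and Lemma~\ref{switch} lets me trade an entry of the base for the leading entry of the first sequence. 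Applying these to $P_D(A\cup\Delta,L)$ on the right-hand side — moving an element $\delta$ of $\Delta$ to the front of the first sequence, past the $n$ entries of $A$ (hence a sign $(-1)^{n(t+1)}$), and a chosen element $\ell$ of $L$ to the front of the second — rewrites each right-hand term, and a short computation with the determinant factors identifies the result, up to sign and the explicit tangent ratio $T_{D\cup(L\setminus\{\ell\})}(\delta)/T_{D\cup(L\setminus\{\ell\})}(\ell)$, with the corresponding right-hand side for the \emph{reduced configuration} in which $\delta$ has been moved from $\Omega$ into $D$, $\ell$ has been moved from $L$ into $\Omega$, and $r-n$ has decreased by one (one checks the size constraints of the lemma are inherited).

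The left-hand side must be brought into the same form, and this is where Lemma~\ref{interpolation} is used: applied with $Y$ the $(k-2)$-subset $\bigl(A\cup(L\setminus B)\cup D\bigr)\setminus\{\ell\}$ for the same $\ell\in L\setminus B$ and $E$ the $(t+2)$-set $\Omega\cup B\cup\{\ell\}$, the relation $0=\sum_{a\in E}T_Y(a)\prod_{z\in E\setminus\{a\}}\det(z,a,Y)^{-1}$ expresses the determinant product of a left-hand term as (minus) a sum of terms in which one element of $\Omega\cup B$ has been promoted into the fixed part of the determinants; isolating the contributions of the elements of $\Omega$ and applying the peeling identity once more reorganises the left-hand side into the reduced left-hand side, summed appropriately over the moved elements, after which the induction hypothesis finishes the step. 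The constant gains one factor of $(-1)^{nt+n+1}=(-1)^{n(t+1)+1}$ at each step — the $(-1)^{n(t+1)}$ from moving $\delta$ past $A$ and the extra $-1$ from isolating a single term of the vanishing interpolation sum — giving $(-1)^{(r-n)(nt+n+1)}$ after $r-n$ steps.

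Two points require care. The first is sign bookkeeping: the weights $(-1)^{\sigma(B,L)}$ and $(-1)^{(r-n)(nt+n+1)}$ must be reconciled with every $(-1)^{t+1}$ produced by Lemmas~\ref{numerator}, \ref{denominator}, \ref{switch} and with the column permutations inside the determinants; it is exactly because $\sigma$ is a multiple of $t+1$ that the left-hand side behaves consistently under reordering of $L$, and keeping $A,L,D,\Omega$ as subsequences of the globally ordered $S$ pins all signs down. The second, and the genuine obstacle, is that once the interpolation substitution is made and like terms are collected, each reduced term appears with a coefficient that is a binomial coefficient in $r-n$ (the subtle contributions being those in which the element just moved into $\Omega$ is itself selected into $\Delta$); the induction closes only because the hypothesis $n\leq r\leq n+p-1$ forces $0\leq r-n\leq p-1$, so that every such binomial coefficient is a nonzero element of ${\mathbb F}_q$, while the hypothesis $r\leq t+2$ is precisely what guarantees that the set $E$ above has size $t+2$ and hence that Lemma~\ref{interpolation} applies. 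I expect essentially all of the work to be this sign-and-coefficient verification rather than any further structural idea.
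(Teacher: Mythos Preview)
Your skeleton is right --- induction on $r$ (equivalently on $r-n$), Lemma~\ref{interpolation} as the engine, and the fact that the integer coefficient produced is $r-n$, nonzero in $\mathbb F_q$ exactly when $r-n\le p-1$ --- and these are precisely the ingredients the paper uses. But your specific reduction is not the paper's, and it is the harder of the two to close.

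The paper never moves anything from $\Omega$ into $D$, nor from $L$ into $\Omega$. The inductive step is simply: fix $x\in L$ and apply the hypothesis to $(L\setminus\{x\},\,D\cup\{x\})$, so the reduced configuration keeps $\Omega$ intact and only shrinks $L$. The interpolation is applied on the \emph{right-hand} side, with $Y=D\cup A\cup\Delta$ and $E=L\cup(\Omega\setminus\Delta)$ for each $\Delta$ of size $r-n-1$; after multiplying by $P_D(A\cup\Delta\cup\{d\},L)\,T_{D\cup A\cup\Delta}(d)^{-1}$ the two pieces of the interpolation sum become, respectively, $\sum_{x\in L}(-1)^{\sigma(x,L)}P_{D\cup\{x\}}(A\cup\Delta,L\setminus\{x\})\cdots$ and $\sum_{y\in\Omega\setminus\Delta}P_D(A\cup\Delta\cup\{y\},L)\cdots$. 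Summing over $\Delta$, the first piece is (by the induction hypothesis, one $x$ at a time) exactly the left-hand side at level $r$ counted $r-n$ times --- once for each $x\in L\setminus B$ --- and the second piece is the right-hand side at level $r$ counted $r-n$ times, once for each $y\in\Delta$. Division by $r-n$ finishes. The only sign identity needed beyond Lemmas~\ref{numerator}--\ref{switch} is $\sigma(B,L)=\sigma(x,L)+\sigma(B,L\setminus\{x\})+\sigma(x,L\setminus(B\cup\{x\}))+n(t+1)$.

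Your route instead peels a $\delta\in\Delta$ into $D$ and pushes an $\ell\in L$ into $\Omega$, then interpolates with $E=\Omega\cup B\cup\{\ell\}$. This leaves you carrying the ratio $T_{D\cup(L\setminus\{\ell\})}(\delta)/T_{D\cup(L\setminus\{\ell\})}(\ell)$, which depends on both moved elements and does not factor out of the $B$-sum; and once $\ell$ sits in the new $\Omega$ it can be chosen into the next $\Delta$, producing exactly the ``subtle contributions'' you anticipate. None of this is visibly fatal, but it is extra work that the paper's choice of $E$ avoids entirely: with $E=L\cup(\Omega\setminus\Delta)$ the tangent values are absorbed cleanly into Segre products via the displayed identities $P_D(A\cup\Delta\cup\{d\},L)\,T_{D\cup A\cup\Delta}(x)/T_{D\cup A\cup\Delta}(d)=P_{D\cup\{x\}}(A\cup\Delta,L\setminus\{x\})$ and its $y$-analogue, and the only scalar that survives is the single integer $r-n$ rather than a family of binomial coefficients.
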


\begin{proof}
By induction on $r$. The case $r=n$ is straightforward. 

Fix an $x \in L$ and apply the inductive step to $L\setminus \{ x\}$ and $\{x\} \cup D$,
$$
\sum_{\substack{B \subseteq L\setminus \{ x \} \\ |B|=n}} (-1)^{\sigma(B,L \setminus \{x \})} P_{D\cup (L \setminus B)}(A,B) \prod_{z \in \Omega \cup B} \det(z,A,L\setminus (B \cup \{x\}),x,D)^{-1}=
$$
$$
(-1)^{(r-n-1)(nt+n+1)} \sum_{\substack{\Delta \subseteq \Omega \\ |\Delta|=r-n-1}}  P_{D \cup \{x\}}(A \cup \Delta,L\setminus \{x\}) \prod_{z \in (\Omega \setminus \Delta)\cup L} \det(z,A,\Delta,x,D)^{-1}.
$$
Let $\Delta$ be a subset of $\Omega$ of size $r-n-1$. The set $\Omega\setminus \Delta$ has size $t+1-n-(r-n-1)=t+2-r$ and so since $r \leq t+2$ we can apply Lemma~\ref{interpolation}, with $E=L \cup (\Omega \setminus \Delta)$ and $Y=D \cup A \cup \Delta$, and get
$$
0=\sum_{x \in L} T_{D \cup A \cup \Delta}(x) \prod_{z \in (\Omega \setminus \Delta) \cup (L \setminus \{x\})} \det (z,A,\Delta,x,D)^{-1}$$
$$
+ \sum_{y \in \Omega \setminus \Delta} T_{D \cup A \cup \Delta}(y) \prod_{z \in (\Omega \setminus (\{y\} \cup \Delta)) \cup L} \det(z,A,\Delta,y,D)^{-1}.
$$

Multiply this equation by $P_D(A \cup \Delta \cup d,L)T_{D \cup A \cup \Delta} (d)^{-1}$ for some $d$ for which $T_{D \cup A \cup \Delta}(d) \neq 0$.
By Lemma~\ref{denominator} we can rearrange $L$ so that the last element is $x$, which changes the sign by $\sigma(x,L)$. This gives
$$
0=\sum_{x \in L} (-1)^{\sigma(x,L)} P_{D \cup x}(A \cup \Delta,L\setminus \{x\}) \prod_{z \in (\Omega \setminus \Delta) \cup (L \setminus \{x\})} \det (z,A,\Delta,x,D)^{-1}+
$$
$$
 \sum_{y \in \Omega \setminus \Delta} P_{D}(A \cup \Delta \cup \{y\},L) \prod_{z \in (\Omega \setminus (\Delta \cup \{y\})) \cup L} \det(z,A,\Delta,y,D)^{-1},
$$
since 
$$
P_D(A \cup \Delta \cup \{d\},L)T_{D \cup A \cup \Delta}(x)T_{D \cup A \cup \Delta}(d)^{-1}=P_{D \cup x}(A \cup \Delta,L \setminus x)
$$
and by Lemma~\ref{switch} (and Lemma~\ref{numerator})
$$
P_D(A \cup \Delta \cup \{d\},L)T_{D \cup A \cup \Delta}(y) T_{D \cup A \cup \Delta} (d)^{-1}=P_D(A \cup \Delta \cup y,L).
$$
Note that in the second term we can order $\Delta \cup \{ y\}$ in any way we please without changing the sign since, by Lemma~\ref{numerator}, interchanging two elements of $\Delta \cup \{y\}$ in \\
$P_D(A \cup \Delta \cup \{y\},L)$ changes the sign by $(-1)^{t+1}$, exactly the same change occurs when we interchange the same vectors in the product of determinants.

Therefore, when we sum this equation over subsets $\Delta$ of $\Omega$ of size $r-n-1$ and apply the induction hypothesis, we get

$$
0=\sum_{x \in L} (-1)^{\sigma(x,L)+(r-n-1)(nt+n+1)} 
\sum_{\substack{B \subset L\setminus \{x\} \\ |B|=n}} (-1)^{\sigma(B,L \setminus \{x\})} P_{D\cup (L \setminus B)}(A,B) 
$$
$$
\prod_{z \in \Omega \cup B} \det(z,A,L\setminus (B \cup \{x\}),x, D)^{-1}+
$$
$$
(r-n)\sum_{\substack{\Delta \subseteq \Omega \\ |\Delta|=r-n}}  P_D(A \cup \Delta,L) \prod_{z \in (\Omega \setminus \Delta)\cup L} \det(z,A,\Delta,D)^{-1}.
$$

Since 
$$
\sigma(B,L)=\sigma(x,L)+\sigma(B, L \setminus \{x\})+\sigma(x,L \setminus (B \cup \{x\}))+n(t+1),
$$ 
this equation gives
$$
(-1)^{(r-n)(nt+n+1)}(r-n) \sum_{\substack{B \subset L \\ |B|=n}} (-1)^{\sigma(B,L)} P_{D\cup (L \setminus B)}(A,B) \prod_{z \in \Omega \cup B} \det(z,A,L\setminus B,D)^{-1}=
$$
$$
(r-n)\sum_{\substack{\Delta \subseteq \Omega \\ |\Delta|=r-n}}  P_D(A \cup \Delta,L) \prod_{z \in (\Omega \setminus \Delta)\cup L} \det(z,A,\Delta,D)^{-1},
$$
which is what we wanted to prove.
\end{proof}

\begin{theorem} \label{prime}
If $k \leq p$ then $|S| \leq q+1$.
\end{theorem}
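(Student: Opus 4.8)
The plan is to argue by contradiction. Suppose some such $S$ has $|S| \geq q+2$; replacing it by a subset, I may assume $|S| = q+2$, so that $t = q+k-1-|S| = k-3$. The cases $k \leq 3$ are classical and I would dispose of them first: for $k=1,2$ one has $|S| \leq q+1$ outright, and for $k=3$ such an $S$ would be a set of $q+2$ points of $\mathrm{PG}(2,q)$ with no three collinear, which exists only for $q$ even, whereas $k=3 \leq p$ forces $p\geq 3$ and hence $q$ odd. So I may assume $k \geq 4$, and then $t \geq 1$.

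The crux is the following sub-claim, which I will apply first to $S$ itself and, if necessary, to its dual: \emph{there is no set $S'$ of $q+2$ vectors of $\mathbb{F}_q^{k'}$, every $k'$ of which form a basis, with $4 \leq k' \leq p$ and $q \geq 2k'-5$.} Writing $t' = k'-3$ (so the hypothesis $q \geq 2k'-5$ says exactly that $|S'| = q+2 \geq 2k'-3 = k'+t'$), I would apply Lemma~\ref{basic} to $S'$ with $n=0$, $r = t'+2 = k'-1$, $A = D = \emptyset$, $L$ an arbitrary $(k'-1)$-subset of $S'$, and $\Omega$ the remaining $k'-2$ vectors; this is admissible because $0 \leq r = k'-1 \leq p-1$ — this is the only point where $k' \leq p$ is used — and $r = t'+2$. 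Since $n = 0$, the left-hand side of the identity in Lemma~\ref{basic} reduces to the single term $B = \emptyset$, namely $\prod_{z \in \Omega}\det(z,L)^{-1}$, which is non-zero because $\{z\}\cup L$ is a basis for every $z \in \Omega$. The right-hand side is the sum over subsets $\Delta \subseteq \Omega$ with $|\Delta| = r = k'-1$, and as $|\Omega| = k'-2 < k'-1$ there are none, so the right-hand side is $0$. This contradiction proves the sub-claim.

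Applying the sub-claim with $k' = k$ finishes the proof unless $q < 2k-5$. In that case $q < 2k-5 \leq 2p-5$, which is impossible when $q = p^h$ with $h \geq 2$ (then $q \geq p^2 > 2p-5$); hence $q = p$ is prime, and moreover $q \geq 7$, so $q$ is odd. I would then pass to the dual code: viewing $S$ as the columns of a generator matrix of a linear MDS code and taking the dual (again MDS), one obtains a set $S^{*}$ of $q+2$ vectors of $\mathbb{F}_q^{\,k^{*}}$, with $k^{*} = q+2-k$, every $k^{*}$ of which form a basis. From $q < 2k-5$ one gets $k > (q+5)/2$, whence $k^{*} < (q-1)/2 < q = p$ (so $k^{*} \leq p$) and $2k^{*}-5 = 2q-1-2k \leq q$ (again because $k > (q-1)/2$). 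If $k^{*} \geq 4$, the sub-claim applied to $S^{*}$ gives a contradiction; and if $k^{*} \leq 3$, then $k^{*} \in \{2,3\}$ and the classical low-dimensional facts used above (now with $q$ odd) again rule out such an $S^{*}$. In every case we reach a contradiction, so $|S| \leq q+1$.

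The step I expect to be the main obstacle is precisely the verification that Lemma~\ref{basic} is applicable inside the sub-claim: the subsequences $A, L, D, \Omega$ are forced to have total size $k'+t'$, so one must know they fit inside $S'$, i.e.\ that $q \geq 2k'-5$; and it is the possible failure of this for small prime $q$ with $k'$ near $p$ that makes the passage to the dual code necessary. The remaining ingredients — the sign bookkeeping with $\sigma(B,L)$ already absorbed into Lemma~\ref{basic}, and the elementary low-dimensional cases — are routine.
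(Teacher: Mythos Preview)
Your proposal is correct and follows essentially the same route as the paper: apply Lemma~\ref{basic} with $n=0$, $r=t+2=k-1$ (which needs $k\le p$) to force $\prod_{z\in\Omega}\det(z,L)^{-1}=0$, and use duality in the prime case to ensure the required sets fit inside $S$. The paper compresses the duality step into a citation of \cite[Lemma 5.1]{Ball2011} (reducing to $k\le (q+1)/2$) and leaves the $k\le 3$ cases and the non-prime size check implicit, but your more explicit treatment of these points is sound.
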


\begin{proof}
If $|S|=q+2$ then $t=k-3$. 
If $q$ is prime then, by \cite[Lemma 5.1]{Ball2011}, we may dualise in ${\mathbb F}_q^{q+2}$, if necessary, to assume that $k \leq (q+1)/2$ and so $k+t \leq q+2$. 

Since $k+t \leq q+2$ we can apply Lemma~\ref{basic} with $r=t+2=k-1$ and $n=0$ and get
$$
\prod_{z \in \Omega } \det(z,L)^{-1}=0,
$$
which is a contradiction. 
\end{proof}


\section{The case $|S|=q+2$ and $q$ is non-prime.}

For any subsequence $X=\{x_1,\ldots,x_m\}$ of $S$ and $\tau \subseteq \{1,2,\ldots,m \}$, define the subsequence $X_{\tau}=\{ x_i \ | \ i \in \tau \}$.

\begin{lemma} \label{twotothen}
Suppose that $|S|=q+2$ and $n\geq k-p$. Let $A$ of size $n-m$, $L$ of size $k-1-m$, $\Omega$ of size $k-2-n$, $X$ of size $m$, $Y$ of size $m$ be disjoint subsequences of $S$. Then
$$
0=\sum_{\substack{B \subseteq L \\ |B|=n-m}} \sum_{\tau \subseteq M}(-1)^{\sigma(B,L)+\sigma(X_{\tau},X)+|\tau|} P_{(L \setminus B)\cup X_{M \setminus \tau}}(A \cup Y_{\tau},B \cup X_{\tau})
$$
$$
\times \prod_{z \in \Omega \cup B \cup X_{\tau} \cup Y_{M \setminus \tau}} \det(z,A,X_{M\setminus \tau},Y_{\tau},L\setminus B)^{-1},
$$
where $M=\{1,\ldots,m \}$.
\end{lemma}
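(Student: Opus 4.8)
The plan is to prove the identity by induction on $m$, the base case $m=0$ being essentially Lemma~\ref{basic}. Since $|S|=q+2$ we have $t=k-3$, so $t+2=k-1$, and Lemma~\ref{basic} applies with $r=t+2=k-1$, $D=\emptyset$ (which has the required size $k-1-r=0$), the given $A$ of size $n$ and $\Omega$ of size $t+1-n=k-2-n$: the sets are pairwise disjoint, $n\le k-2<k-1=r$, the hypothesis $r\le n+p-1$ reads exactly $n\ge k-p$, and $r\le t+2$ holds with equality. The right-hand side of Lemma~\ref{basic} is a sum over subsets $\Delta\subseteq\Omega$ of size $r-n=k-1-n$; since $|\Omega|=k-2-n<k-1-n$ there are none, so it vanishes, and what is left is precisely the $m=0$ instance of the statement (where $M=\emptyset$, only $\tau=\emptyset$ contributes, and $X_\tau=Y_\tau=\emptyset$).

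For the inductive step I assume the statement for $m$ and all admissible data, and prove it for $m+1$ with data $A,L,\Omega,X,Y$; write $X=X'\cup\{x_{m+1}\}$ and $Y=Y'\cup\{y_{m+1}\}$ with $|X'|=|Y'|=m$. I apply the $m$-case to the enlarged data $A\cup\{y_{m+1}\}$ (size $n-m$), $L\cup\{x_{m+1}\}$ (size $k-1-m$), $\Omega$, $X'$, $Y'$; this is admissible, for the sets remain pairwise disjoint, $\Omega$ still has size $k-2-n$, and $n\ge k-p$ is unchanged. The resulting identity has its outer sum over $(n-m)$-subsets $\tilde B$ of $L\cup\{x_{m+1}\}$ and over $\tilde\tau\subseteq\{1,\ldots,m\}$, and I split it according to whether $x_{m+1}\in\tilde B$. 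For the part with $x_{m+1}\in\tilde B$, write $\tilde B=B\cup\{x_{m+1}\}$ with $B\subseteq L$, $|B|=n-m-1$, and set $\tau=\tilde\tau\cup\{m+1\}$; then $x_{m+1}$ has joined the second sequence and $y_{m+1}$ the first sequence of the Segre product, $(L\cup\{x_{m+1}\})\setminus\tilde B=L\setminus B$, and, after reordering the two sequences of the Segre product (Lemmas~\ref{numerator} and \ref{denominator}) and the columns of the determinants, these terms become exactly the summands of the sought $(m+1)$-identity with $m+1\in\tau$.

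It remains to match the part with $x_{m+1}\notin\tilde B$ (so $\tilde B\subseteq L$, $|\tilde B|=n-m$) with the summands having $m+1\notin\tau$. In that part $y_{m+1}$ occupies a place in the first sequence of the Segre product while $x_{m+1}$ is in its base; using the first-step factorisation of the Segre product together with Lemma~\ref{switch} and Lemma~\ref{segre}, I extract $y_{m+1}$ from the Segre product at the cost of a tangent factor and the deletion of one element $b$ of $\tilde B$, which leaves a Segre product of the shape occurring in the target with $B=\tilde B\setminus\{b\}$. Summing over the choice of $b\in\tilde B$ and applying Lemma~\ref{interpolation} (with $E$ a $(t+2)$-set containing $y_{m+1}$, the deletable elements $b$, and enough further vectors, and $Y$ the complementary $(k-2)$-set) converts the tangent factors into the inverse determinants $\det(y_{m+1},A,X'_{\{1,\ldots,m\}\setminus\tau},x_{m+1},Y'_\tau,L\setminus B)^{-1}$ present in the target, yielding exactly the $m+1\notin\tau$ summands. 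Adding the two parts gives $0$, the desired identity. The signs $(-1)^{\sigma(\cdot,\cdot)}$ are handled as in the proof of Lemma~\ref{basic}, using the identity $\sigma(B,L)=\sigma(x,L)+\sigma(B,L\setminus\{x\})+\sigma(x,L\setminus(B\cup\{x\}))+|B|(t+1)$ and the fact that transposing two vectors within a Segre-product sequence (Lemmas~\ref{numerator}, \ref{denominator}) or within a determinant changes both sides by the same sign.

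The step I expect to be the main obstacle is exactly this sign bookkeeping in the inductive step: one must choose the orderings of $A\cup\{y_{m+1}\}$ and of $L\cup\{x_{m+1}\}$, and the reorderings used when invoking Lemmas~\ref{switch}, \ref{numerator} and \ref{denominator}, so that the powers of $(-1)^{t+1}$ they produce, combined with the $\sigma$-signs and with the factors $(-1)^{|\tau|}$ and $(-1)^{\sigma(X_\tau,X)}$ appearing in the statement, reproduce with no residual sign the two families of summands — those with $m+1\in\tau$ and those with $m+1\notin\tau$ — of the $(m+1)$-identity.
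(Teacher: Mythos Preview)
Your base case and the split of the inductive sum according to whether $x_{m+1}\in\tilde B$ are fine, and the branch $x_{m+1}\in\tilde B$ really does produce the $(m+1)\in\tau$ summands after reordering. The gap is in the other branch. When $x_{m+1}\notin\tilde B$ you write the Segre product as $P_{D}(\{y_{m+1}\}\cup A',\{b\}\cup B')$ (with $D=(L\setminus\tilde B)\cup\{x_{m+1}\}\cup X'_{M\setminus\tilde\tau}$, $A'=A\cup Y'_{\tilde\tau}$, $B'=(\tilde B\cup X'_{\tilde\tau})\setminus\{b\}$) and peel off the first factor. That first factor is $T_{D\cup B'}(y_{m+1})/T_{D\cup B'}(b)$, and what remains is $P_{D\cup\{y_{m+1}\}}(A',B')$: the base acquires $y_{m+1}$, not $b$. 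So the residual Segre product is \emph{not} ``of the shape occurring in the target with $B=\tilde B\setminus\{b\}$'' --- in the target the base is $(L\setminus B)\cup\{x_{m+1}\}\cup X'_{M\setminus\tilde\tau}$, i.e.\ $D\cup\{b\}$, not $D\cup\{y_{m+1}\}$. Lemma~\ref{switch} does not fix this either: it swaps the first element of the $A$-sequence with an element of the base, so it would only trade $y_{m+1}$ for $x_{m+1}$, still leaving one of them inside. Moreover your appeal to Lemma~\ref{interpolation} is problematic: that lemma is a vanishing identity over a $(t+2)$-set $E$, and if $E$ must contain $y_{m+1}$ and the elements of $\tilde B$ together with ``enough further vectors'', those further vectors contribute extra terms you have not accounted for; meanwhile the residual Segre product depends on the choice of $b$ (through $B'$), so it cannot simply be factored out of the $b$-sum.

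What the paper actually does in the inductive step is different and avoids this obstruction: it applies the induction hypothesis \emph{twice}, once to $(\{y\}\cup L,\,A\cup\{x\},\,X,\,Y)$ and once to $(\{x\}\cup L,\,A\cup\{y\},\,X,\,Y)$, and multiplies the first instance by the tangent ratio $T_{L\cup X}(y)/T_{L\cup X}(x)$. After using Lemma~\ref{switch} and Lemmas~\ref{numerator}--\ref{denominator}, the two ``new element $\notin B$'' parts become identical (up to swapping $x,y$ inside the determinants), so equating the two identities forces the two ``new element $\in B$'' parts to agree. Those two ``$\in B$'' parts are exactly the $m{+}1\notin\tau$ and $m{+}1\in\tau$ halves of the $(m{+}1)$-identity. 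No call to Lemma~\ref{interpolation} is needed in the inductive step; the whole point is that the troublesome branch is matched against its mirror image rather than reduced directly. If you want to repair your argument, this second application of the hypothesis with $x$ and $y$ interchanged is the missing idea.
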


\begin{proof}
By induction on $m$. For $m=0$ this is Lemma~\ref{basic} with $r=t+2=k-1$, which gives the bound $n \geq k-p$.

Suppose that $X$ and $Y$ have size $m$ and that $x, y \in S$ are not contained in $X$, $Y$, $L$ or $A$. We wish to prove the equation for $X \cup \{ x \}$, $Y \cup \{ y \}$, $L$ and $A$, where $|L|=k-2-m$ and $|A|=n-m-1$.

Apply the inductive step to $\{ y \} \cup L$, $A \cup \{ x \}$, $X$ and $Y$.

Writing the first sum as two sums depending on whether $B$ contains $y$ or not, we have
$$
0=\sum_{\substack{B \subseteq L \\ |B|=n-m}} \sum_{\tau \subseteq M}(-1)^{\sigma(B,L)+\sigma(X_{\tau},X)+|\tau|} P_{(L \setminus B) \cup \{ y \} \cup X_{M \setminus \tau}}(A \cup \{ x \} \cup Y_{\tau},B \cup X_{\tau})
$$
$$
\times
\prod_{z \in \Omega \cup B \cup X_{\tau} \cup Y_{M \setminus \tau}} \det(z,A,x ,X_{M\setminus \tau},Y_{\tau}, y, L\setminus B)^{-1}
$$
$$
+\sum_{\substack{B \subseteq L \\ |B|=n-m-1}} \sum_{\tau \subseteq M}(-1)^{\sigma(\{y\} \cup B,\{ y \} \cup L)+\sigma(X_{\tau},X)+|\tau|} P_{(L \setminus B)\cup X_{M \setminus \tau}}(A \cup \{ x \} \cup Y_{\tau}, \{ y \} \cup B \cup X_{\tau})
$$
$$
\times
\prod_{z \in \Omega \cup B \cup X_{\tau} \cup Y_{M \setminus \tau} \cup \{ y \}} \det(z,A,x,X_{M\setminus \tau},Y_{\tau}, L\setminus B)^{-1}.
$$
By Lemma~\ref{numerator}, then Lemma~\ref{switch} and then Lemma~\ref{numerator} again, we have
$$
\frac{T_{L \cup X}(y)}{T_{L \cup X}(x)} P_{(L \setminus B) \cup \{ y \} \cup X_{M \setminus \tau}}(A \cup \{ x \} \cup Y_{\tau},B \cup X_{\tau})=
$$
$$
(-1)^{(n-m+1)(t+1)} \frac{T_{L \cup X}(y)}{T_{L \cup X}(x)} P_{(L \setminus B) \cup \{ y \} \cup X_{M \setminus \tau}}(\{ x \} \cup A \cup Y_{\tau},B \cup X_{\tau})=
$$
$$
(-1)^{(n-m)(t+1)} P_{(L \setminus B) \cup \{ x \} \cup X_{M \setminus \tau}}(\{ y \} \cup A \cup Y_{\tau},B \cup X_{\tau})
$$
$$
=(-1)^{t+1}P_{(L \setminus B) \cup \{ x \} \cup X_{M \setminus \tau}}(A \cup \{ y \} \cup Y_{\tau},B \cup X_{\tau}),
$$
and by Lemma~\ref{numerator} and the definition of the Segre product
$$
\frac{T_{L \cup X}(y)}{T_{L \cup X}(x)} P_{(L \setminus B)\cup X_{M \setminus \tau}}(A \cup \{ x \} \cup Y_{\tau}, \{ y \} \cup B \cup X_{\tau})=
$$
$$
(-1)^{(n-m+1)(t+1)}\frac{T_{L \cup X}(y)}{T_{L \cup X}(x)} P_{(L \setminus B)\cup X_{M \setminus \tau}}(\{ x \} \cup A \cup Y_{\tau}, \{ y \} \cup B \cup X_{\tau})=
$$
$$
(-1)^{(n-m+1)(t+1)}P_{(L \setminus B)\cup X_{M \setminus \tau} \cup \{ x \} }(A \cup Y_{\tau},  B \cup X_{\tau}).
$$
Thus, multiplying the equation before by $T_{L \cup X}(y)T_{L \cup X}(x)^{-1}$ and noting that 
$$\sigma(\{y\} \cup B,\{ y \} \cup L)=\sigma(B,L)+(k-n-1)(t+1),$$
we have
$$
0=\sum_{\substack{B \subseteq L \\ |B|=n-m}} \sum_{\tau \subseteq M}(-1)^{\sigma(B,L)+\sigma(X_{\tau},X)+|\tau|} P_{(L \setminus B) \cup \{ x \} \cup X_{M \setminus \tau}}(  A  \cup \{ y \} \cup Y_{\tau},B \cup X_{\tau})
$$
$$
\times
\prod_{z \in \Omega \cup B \cup X_{\tau} \cup Y_{M \setminus \tau}} \det(z,A,x ,X_{M\setminus \tau},Y_{\tau}, y, L\setminus B)^{-1}
$$
$$
+\sum_{\substack{B \subseteq L \\ |B|=n-m-1}} \sum_{\tau \subseteq M}(-1)^{\sigma(B,L)+\sigma(X_{\tau},X)+|\tau|+(k-m-1)(t+1)} P_{(L \setminus B)\cup X_{M \setminus \tau} \cup \{ x \}}(A \cup Y_{\tau}, B \cup X_{\tau})
$$
$$
\times
\prod_{z \in \Omega \cup B \cup X_{\tau} \cup Y_{M \setminus \tau} \cup \{ y \}} \det(z,A,x,X_{M\setminus \tau},Y_{\tau}, L\setminus B)^{-1}.
$$
Applying the inductive step to $\{x \} \cup L$, $A \cup \{ y \}$, $X$ and $Y$ and writing the sum as two sums depending on whether $B$ contains $x$ or not, gives an equation similar to the above. The first sum in both equations vary only in the position of $x$ and $y$ in the determinants. Switching these in the above, multiplying by $(-1)^{t+1}$, and equating the two second sums gives,
$$
\sum_{\substack{B \subseteq L \\ |B|=n-m-1}} \sum_{\tau \subseteq M}(-1)^{\sigma(B,L)+\sigma(X_{\tau},X)+|\tau|+(k-m)(t+1)} P_{(L \setminus B)\cup X_{M \setminus \tau} \cup \{ x \}}(A \cup Y_{\tau}, B \cup X_{\tau})
$$
$$
\times
\prod_{z \in \Omega \cup B \cup X_{\tau} \cup Y_{M \setminus \tau} \cup \{ y \}} \det(z,A,x,X_{M\setminus \tau},Y_{\tau}, L\setminus B)^{-1}.
$$
$$
=\sum_{\substack{B \subseteq L \\ |B|=n-m-1}} \sum_{\tau \subseteq M}(-1)^{\sigma(B,L)+\sigma(X_{\tau},X)+|\tau|+(k-n-1)(t+1)} P_{(L \setminus B)\cup X_{M \setminus \tau}}(A \cup \{ y \} \cup Y_{\tau}, \{x \} \cup B \cup X_{\tau})
$$
$$
\times
\prod_{z \in \Omega \cup B \cup X_{\tau} \cup Y_{M \setminus \tau} \cup \{ x \}} \det(z,A,y,X_{M\setminus \tau},Y_{\tau}, L\setminus B)^{-1}.
$$
Note that on the right-hand side of the equality we use
$$
\sigma(\{x\} \cup B,\{ x \} \cup L)=\sigma(B,L)+(k-n-1)(t+1)
$$
Rearranging the order of the vectors in the Segre product of the right-hand side (applying Lemma~\ref{numerator} and Lemma~\ref{denominator}) and the vectors in the determinants gives
$$
\sum_{\substack{B \subseteq L \\ |B|=n-m-1}} \sum_{\tau \subseteq M}(-1)^{\sigma(B,L)+\sigma(X_{\tau},X)+|\tau|-|\tau|(t+1)} P_{(L \setminus B)\cup X_{M \setminus \tau} \cup \{ x \}}(A \cup Y_{\tau}, B \cup X_{\tau})
$$
$$
\times
\prod_{z \in \Omega \cup B \cup X_{\tau} \cup Y_{M \setminus \tau} \cup \{ y \}} \det(z,A,X_{M\setminus \tau},x,Y_{\tau}, L\setminus B)^{-1}.
$$
$$
=\sum_{\substack{B \subseteq L \\ |B|=n-m-1}} \sum_{\tau \subseteq M}(-1)^{\sigma(B,L)+\sigma(X_{\tau},X)+|\tau|} P_{(L \setminus B)\cup X_{M \setminus \tau}}(A \cup Y_{\tau}\cup \{ y \} , B \cup X_{\tau}\cup \{x \} )
$$
$$
\times
\prod_{z \in \Omega \cup B \cup X_{\tau} \cup Y_{M \setminus \tau} \cup \{ x \}} \det(z,A,X_{M\setminus \tau},Y_{\tau}, y, L\setminus B)^{-1}.
$$
Finally, note that 
$$\sigma((X \cup \{ x \})_{\tau},X \cup \{ x\})=|\tau|(t+1)+ \sigma(X_{\tau},X)$$
and that 
$$
\sigma((X \cup \{ x \})_{\tau \cup \{ m+1 \}},X \cup \{ x\})=\sigma(X_{\tau},X),$$
from which we deduce that
$$
\sum_{\substack{B \subseteq L \\ |B|=n-m-1}} \sum_{\tau \subseteq M}(-1)^{\sigma(B,L)+\sigma(X^+_{\tau},X^+)+|\tau|} 
 P_{(L \setminus B)\cup X^+_{M^+\setminus \tau} }(A \cup Y^+_{\tau}, B \cup X^+_{\tau})
$$
$$
\times
\prod_{z \in \Omega \cup B \cup X^+_{\tau} \cup Y^+_{M^+ \setminus \tau} } \det(z,A,X^+_{M^+ \setminus \tau},Y^+_{\tau}, L\setminus B)^{-1}.
$$
$$
=\sum_{\substack{B \subseteq L \\ |B|=n-m-1}} \sum_{\tau \subseteq M}(-1)^{\sigma(B,L)+\sigma(X^+_{\tau^+},X^+)+|\tau|} 
P_{(L \setminus B)\cup X^+_{M^+\setminus \tau^+ }}(A \cup Y^+_{\tau^+}, B \cup X^+_{\tau^+}  )
$$
$$
\times
\prod_{z \in \Omega \cup B \cup X^+_{\tau^+ } \cup Y^+_{M^+ \setminus \tau^+}} \det(z,A,X^+_{M^+ \setminus \tau^+},Y^+_{\tau^+}, L\setminus B)^{-1},
$$
where $X^+=X \cup \{ x \}$, $Y^+=Y \cup \{ y\}$, $\tau^+=\tau \cup \{ m+1\}$ and $M^+=M \cup \{ m+1 \}$, which is what we wanted to prove.
\end{proof}

\section{The main theorem}

The following follows from Laplace's formula for determinants.

\begin{lemma} \label{laplace}
Suppose that $W \cup L$ is a basis of ${\mathbb F}_q^k$ and $|X|=n$ and $W=\{ w_1.w_2,\ldots,w_{n+1} \}$. Then
$$
\sum_{j=1}^{n+1} (-1)^{j-1} \det (y,W \setminus w_j,L)\det(w_j,X,L)=\det(W,L)\det(y,X,L).
$$
\end{lemma}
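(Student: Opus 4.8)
The identity is a disguised form of Laplace (cofactor) expansion, so the plan is to reduce it to a genuinely $(n+1)\times(n+1)$ statement and then recognise it as the expansion of a determinant that vanishes because two of its rows (or columns) coincide. First I would fix coordinates: since $W\cup L$ is a basis of ${\mathbb F}_q^k$, I may apply a fixed linear change of coordinates so that $L=\{e_{n+2},\ldots,e_k\}$ is the last $k-n-1$ standard basis vectors and $W=\{e_1,\ldots,e_{n+1}\}$. Every determinant appearing in the statement then has its last $k-n-1$ rows equal to $e_{n+2},\ldots,e_k$, so by expanding along those rows each such determinant equals (up to a common sign $\epsilon$, the same in every term) an $(n+1)\times(n+1)$ minor formed from the first $n+1$ coordinates of the remaining vectors. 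Writing $\bar v$ for the vector of the first $n+1$ coordinates of $v\in{\mathbb F}_q^k$, the claim becomes
$$
\sum_{j=1}^{n+1}(-1)^{j-1}\det(\bar y,\bar e_1,\ldots,\widehat{\bar e_j},\ldots,\bar e_{n+1})\,\det(\bar e_j,\bar x_1,\ldots,\bar x_n)=\det(\bar e_1,\ldots,\bar e_{n+1})\,\det(\bar y,\bar x_1,\ldots,\bar x_n),
$$
where $X=\{x_1,\ldots,x_n\}$; and $\det(\bar e_1,\ldots,\bar e_{n+1})=1$, so this is just a claim purely about $(n+1)$-dimensional vectors.

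Second, with $W$ the standard basis the minor $\det(\bar e_j,\bar x_1,\ldots,\bar x_n)$ is, by cofactor expansion along its first column, exactly $(-1)^{1-1}$ times the cofactor one would use, i.e.\ it picks out (with the appropriate sign) the $j$-th ``barycentric'' coordinate of $\bar y$ relative to the frame once we also note $\det(\bar y,\bar e_1,\ldots,\widehat{\bar e_j},\ldots,\bar e_{n+1})=(-1)^{?}\,\bar y_j$ is, up to sign, the $j$-th coordinate of $\bar y$. Tracking these two signs carefully, the left-hand side becomes $\sum_{j=1}^{n+1}\bar y_j\,C_j$, where $C_j$ is (up to the uniform sign) the $(1,j)$-cofactor of the matrix whose rows are $\bar y,\bar x_1,\ldots,\bar x_n$ — no, more precisely one sees the left side is the Laplace expansion along the first row of the $(n+1)\times(n+1)$ determinant with rows $\bar y,\bar x_1,\ldots,\bar x_n$, which is exactly $\det(\bar y,\bar x_1,\ldots,\bar x_n)$. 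Thus the cleanest route is: the left-hand side is, term by term, the standard cofactor expansion along the first column of $\det(\bar y,\bar x_1,\ldots,\bar x_n)$ after using the first determinant in each term to evaluate the relevant entry of $\bar y$; summing gives the right-hand side directly.

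Third I would write this without coordinates if possible, to match the style of the paper: define the $(n+1)\times(k)$ situation intrinsically by noting that for any vectors the function $v\mapsto \det(v,X,L)$ is a linear functional, and $v\mapsto \sum_j (-1)^{j-1}\det(v,W\setminus w_j,L)\det(w_j,X,L)$ is also linear in $v$; both sides agree when $v=w_i$ for each $i=1,\ldots,n+1$ (for $v=w_i$ the left sum collapses to the single term $j=i$, giving $\det(w_i,W\setminus w_i,L)\det(w_i,X,L)=\pm\det(W,L)\det(w_i,X,L)$, and the signs are arranged so this is exactly $\det(W,L)\det(w_i,X,L)$), and they agree when $v\in L$ (both sides vanish, the right because $\det(v,X,L)=0$ with $v\in L$, the left because each $\det(v,W\setminus w_j,L)$ has a repeated row). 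Since $W\cup L$ is a basis, the $w_i$ together with $L$ span ${\mathbb F}_q^k$, so two linear functionals agreeing on a spanning set are equal; evaluating at $v=y$ gives the lemma.

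The only real subtlety — and the step I expect to be fiddly rather than deep — is pinning down the signs: verifying that $\det(w_i,W\setminus w_i,L)$, with $W$ ordered as $w_1,\ldots,w_{n+1}$, equals $(-1)^{i-1}\det(W,L)$ so that the $(-1)^{j-1}$ in the statement is exactly the right compensating sign, and checking the $v\in L$ case really does kill every term. Both are routine once one commits to the ordering conventions fixed at the start of Section~2, so I would simply state ``this is Laplace's expansion of $\det(y,X,L)$ along the rows indexed by $W$'' and relegate the sign bookkeeping to a sentence, exactly as the paper does by attributing it to Laplace's formula.
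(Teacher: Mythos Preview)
Your proposal is correct, and in particular your third, coordinate-free argument (both sides are linear in $y$ and agree on the basis $W\cup L$, since for $y=w_i$ only the term $j=i$ survives and $\det(w_i,W\setminus w_i,L)=(-1)^{i-1}\det(W,L)$) is a clean verification. The paper gives no detailed proof at all, merely asserting that the identity ``follows from Laplace's formula for determinants'', so your write-up is a faithful elaboration of exactly what the paper leaves implicit.
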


\begin{theorem}
If $q$ is non-prime and $k \leq 2p-2$ then $|S| \leq q+1$.
\end{theorem}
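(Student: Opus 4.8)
The plan is to argue by contradiction. Suppose $|S|=q+2$, so that $t=q+k-1-|S|=k-3$. If $k\le p$ we are already done by Theorem~\ref{prime}, so assume $p\le k\le 2p-2$. Since $q$ is non-prime, $q=p^h$ with $h\ge 2$, hence $q\ge p^2>2p-2\ge k$ and $2k-3\le q+2$; so $S$ is large enough to contain all the pairwise disjoint subsequences used below, and $k+t=2k-3\le q+2$ is the size hypothesis of Lemmas~\ref{interpolation} and~\ref{basic}. (No dualisation is required, unlike in the proof of Theorem~\ref{prime}.)

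First I would apply Lemma~\ref{twotothen} with $n=k-p$, the smallest value the lemma allows, and with $m=n=k-p$. Then $A$ is empty and, since $|B|=n-m=0$, so is $B$; moreover $|L|=k-1-m=p-1$, $|\Omega|=k-2-n=p-2$ and $|X|=|Y|=m=k-p$. With $M=\{1,\dots,k-p\}$, Lemma~\ref{twotothen} collapses to the single identity
$$
0=\sum_{\tau\subseteq M}(-1)^{\sigma(X_\tau,X)+|\tau|}\,P_{L\cup X_{M\setminus\tau}}(Y_\tau,X_\tau)\prod_{z\in\Omega\cup X_\tau\cup Y_{M\setminus\tau}}\det(z,X_{M\setminus\tau},Y_\tau,L)^{-1}.
$$
Every determinant occurring here is a determinant of $k$ distinct vectors of $S$, hence nonzero, and every Segre product is a ratio of values of tangent functions evaluated at vectors of $S$ that lie off the relevant hyperplanes, hence also nonzero; so each term of the sum is a well-defined nonzero element of ${\mathbb F}_q$.

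The heart of the matter is then to show that this alternating sum over the $2^{k-p}$ subsets $\tau$ does not vanish — that is the contradiction. I would clear denominators by multiplying through by a fixed product of determinants and tangent values, turning the identity into a polynomial relation $\sum_\tau(\pm)N_\tau=0$, and then collapse the sum over $\tau$ by repeated application of Lemma~\ref{laplace} together with the reordering rules of Lemmas~\ref{numerator}, \ref{denominator} and~\ref{switch}: Laplace's formula $\sum_j(-1)^{j-1}\det(y,W\setminus w_j,L)\det(w_j,X,L)=\det(W,L)\det(y,X,L)$ is exactly what is needed to merge the two determinant blocks attached to a subset $\tau$ and its complement $M\setminus\tau$ after one of the $x_i,y_i$ is moved across the partition. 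After $k-p$ such mergings the sum telescopes to a single term, at the cost of an integer scalar which is a product of integers each bounded by $k-p+1\le p-1$; since the characteristic of ${\mathbb F}_q$ is $p$, this scalar is invertible. Dividing by it leaves one surviving term — a product of nonzero determinants and tangent values — equal to $0$, which is the desired contradiction.

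The main obstacle will be the bookkeeping in this collapse: one must keep track of the permutation signs $\sigma(B,L)$ and $\sigma(X_\tau,X)$, of the powers of $(-1)^{t+1}$ produced each time a Segre product is reordered or a vector is moved between the base and the arguments, and of the exact shape of the common denominator, and then verify that all of this interacts with the $(-1)^{|\tau|}$ in the sum so that the Laplace pairings are genuinely alternating and the telescoping closes. The genuinely delicate point — and the one that pins the bound at $k\le 2p-2$ rather than anything weaker — is that the scalar coming out of the telescoping must be invertible in ${\mathbb F}_q$: this is guaranteed by $n=k-p\le p-2$, and it is here alone that the restriction on $k$ is used.
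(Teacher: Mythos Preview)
Your setup is exactly right: assuming $|S|=q+2$, reducing to $p+1\le k\le 2p-2$, and applying Lemma~\ref{twotothen} with $n=m=k-p$ to obtain the identity
\[
0=\sum_{\tau\subseteq M}(-1)^{\sigma(X_\tau,X)+|\tau|}\,P_{L\cup X_{M\setminus\tau}}(Y_\tau,X_\tau)\prod_{z\in\Omega\cup X_\tau\cup Y_{M\setminus\tau}}\det(z,X_{M\setminus\tau},Y_\tau,L)^{-1}
\]
is precisely how the paper begins. But the mechanism you propose for collapsing this sum is not correct, and the missing idea is substantive.

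Lemma~\ref{laplace} does not ``merge two determinant blocks'' within a single term of the sum; it combines $n+1$ \emph{different equations} into one. The point you have not seen is that the identity above holds for \emph{every} choice of $\Omega$ of size $p-2$. The paper exploits this freedom: it fixes a further sequence $W=\{w_1,\dots,w_{2n}\}\subseteq S$ disjoint from $L\cup X\cup Y$, takes $\Omega=E\cup W_n$ with $|E|=p-2-n$, and then at each inductive step uses the $n+1$ instances of the current identity obtained by replacing one element $w_j$ of the auxiliary block. Summing these against the coefficients $(-1)^{j-1}\det(y_{n+1-r},W_{n+r}\setminus(W_{r-1}\cup\{w_j\}),L)$ is exactly the left side of Lemma~\ref{laplace}; the output, for each fixed $\tau$, is a $\tau$-independent nonzero factor $\det(W_{n+r}\setminus W_{r-1},L)$ times the new numerator $\det(y_{n+1-r},X_{M\setminus\tau},Y_\tau,L)$. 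After $n$ such steps every term with $\tau\neq\emptyset$ carries a determinant with a repeated vector $y_j$ and therefore vanishes, leaving only the $\tau=\emptyset$ term --- a nonzero product equal to zero.

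Two further points. First, no integer scalar appears anywhere in this collapse: Laplace's identity is homogeneous of degree one on each side, and the only extra factors produced are nonzero determinants, not multiplicities. Your claim that the telescoping costs ``a product of integers each bounded by $k-p+1$'' is mistaken. Second, and relatedly, your diagnosis of where the bound $k\le 2p-2$ enters is wrong. It is not an invertibility condition on a scalar; it is the purely combinatorial requirement that $W_n$, of size $n=k-p$, fit inside $\Omega$, of size $p-2$ --- equivalently that $|E|=2p-2-k\ge 0$. Without this there are not enough free slots in $\Omega$ to run the Laplace induction at all.
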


\begin{proof}
By Theorem~\ref{prime}, we can restrict ourselves to the cases $k \geq p+1$.

Suppose $|S|=q+2$ and apply Lemma~\ref{twotothen} with $n=m=k-p$. Then
$$
0=\sum_{\tau \subseteq \{1,\ldots n \}} (-1)^{|\tau|+\sigma(X_{\tau},X)} P_{L \cup X_{M \setminus \tau}} (Y_{\tau},X_{\tau}) \prod_{z \in \Omega \cup X_{\tau} \cup Y_{M \setminus \tau}} \det(z,X_{M \setminus \tau},Y_{\tau},L)^{-1}
$$
where $|L|=p-1$, $\Omega=p-2$ and $|M|=k-p$.

Let $W=\{w_1,w_2,\ldots,w_{2n} \}$ be a subsequence of $S$ disjoint from $L\cup X \cup Y \cup E$, where $E$ is a subset of $\Omega$ of size $p-2-n=2p-k-2$. Define $W_j=\{w_1,w_2,\ldots,w_j \}$.

We shall prove the following by induction on $r \leq n$,
$$
0=\sum_{\tau \subseteq \{1,\ldots n \}} (-1)^{|\tau|+\sigma(X_{\tau},X)} P_{L \cup X_{M \setminus \tau}} (Y_{\tau},X_{\tau}) \prod_{i=1}^r \det(y_{n+1-i},X_{M \setminus \tau},Y_{\tau},L)
$$
$$
 \prod_{z \in E \cup X_{\tau} \cup Y_{M \setminus \tau} \cup W_{n+r}} \det(z,X_{M \setminus \tau},Y_{\tau},L)^{-1}.
$$
For $r=0$ this is the above with $\Omega=E \cup W_n$. Applying the inductive step with $W_{n+r-1}=W_{r+n} \setminus \{ w_j \}$, where $j \in \{r,r+1,\ldots,r+n \}$, we have
$$
0=\sum_{\tau \subseteq \{1,\ldots n \}} (-1)^{|\tau|+\sigma(X_{\tau},X)} P_{L \cup X_{M \setminus \tau}} (Y_{\tau},X_{\tau}) \prod_{i=1}^{r-1} \det(y_{n+1-i},X_{M \setminus \tau},Y_{\tau},L)
$$
$$
\det(w_j,X_{M \setminus \tau},Y_{\tau},L) \prod_{z \in E \cup X_{\tau} \cup Y_{M \setminus \tau} \cup W_{n+r}} \det(z,X_{M \setminus \tau},Y_{\tau},L)^{-1}.
$$
Multiplying by $(-1)^{j-1}\det(y_{n+1-r},W_{n+r} \setminus (W_{r-1} \cup \{w_j \}),L)$, summing over $j\in \{r,r+1,\ldots,r+n \}$ and applying Lemma~\ref{laplace} proves the induction.

For $r=n$ every term in the sum is zero apart from the term corresponding to $\tau=\emptyset$, which gives
$$
0=\prod_{i=1}^n \det(y_{n+1-i},X,L)
 \prod_{z \in E \cup Y \cup W_{2n}} \det(z,X,L)^{-1},
$$
which is a contradiction.
\end{proof}

\begin{cor}
If $q$ is non-prime and $q-2p+4 \leq k \leq q$ then $|S| \leq q+1$.
\end{cor}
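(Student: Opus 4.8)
The plan is to deduce this from the main theorem by passing to the dual code. Assume, for a contradiction, that $|S| \geq q+2$; replacing $S$ by a subset we may assume $|S| = q+2$. Put $k'=q+2-k$. The hypothesis $q-2p+4 \leq k \leq q$ translates precisely into $2 \leq k' \leq 2p-2$, so $k'$ lies in the range treated by the main theorem.

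Next I would invoke MDS duality. Writing the vectors of $S$ as the columns of a $k \times (q+2)$ matrix $G$, the condition that every $k$-subset of $S$ is a basis says exactly that $G$ generates a linear $[q+2,k]$ maximum distance separable code over ${\mathbb F}_q$. Its dual is a linear $[q+2,k']$ code, which is again MDS; this is the content of \cite[Lemma 5.1]{Ball2011}, already used in the proof of Theorem~\ref{prime}. Hence the columns of a generator matrix of the dual code form a set $S^{\perp}$ of $q+2$ vectors of ${\mathbb F}_q^{k'}$ with the property that every $k'$-subset of $S^{\perp}$ is a basis.

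Since $q$ is non-prime and $k' \leq 2p-2$, the main theorem applies to $S^{\perp}$ and yields $|S^{\perp}| \leq q+1$, contradicting $|S^{\perp}|=q+2$. The only points needing a word of care are the small values $k' \in \{2,3\}$ at the ends of the range: for $k'=2$ a set in which every pair is a basis consists of pairwise non-proportional vectors, of which there are at most $q+1$; and the exceptional case $k'=3$ with $q$ even does not arise here, since $q$ non-prime forces $q \geq 4$, so $q-2p+4 = q > 3$ and $k=3$ is already outside the stated range when $q$ is even.

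The substantive content is entirely in the duality step together with the observation that $q-2p+4 \leq k$ is exactly the condition ensuring $q+2-k \leq 2p-2$; once this is in place there is no further obstacle, the heavy lifting having been done in the main theorem. I expect the only mild subtlety to be the bookkeeping of these degenerate low-dimensional cases, so that the blanket assumption $t \geq 1$ made throughout Section~\ref{lemmas} is not silently violated when $k'$ is as small as $2$ or $3$.
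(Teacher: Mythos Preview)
Your argument is correct and is exactly the paper's: assume $|S|=q+2$, apply \cite[Lemma 5.1]{Ball2011} to obtain a dual set of $q+2$ vectors in ${\mathbb F}_q^{q+2-k}$, and contradict the main theorem since $q+2-k\leq 2p-2$. Your extra discussion of the small cases $k'\in\{2,3\}$ goes beyond what the paper writes out; the conclusion there is right, but note that for $q$ even the relevant check is that $k=q-1$ (the value giving $k'=3$) lies outside the range $[q,q]$, not that $k=3$ does.
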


\begin{proof}
Suppose that $|S|=q+2$. Then by \cite[Lemma 5.1]{Ball2011} we can construct a set of vectors $S'$ of ${\mathbb F}_q^{q+2-k}$ of size $q+2$ with the property that every subset of $S'$ of size $q+2-k$ is a basis of ${\mathbb F}_q^{q+2-k}$.
\end{proof}

\section{Appendix}

Using the Segre product and the lemmas from Section~\ref{lemmas} we can give a short proof of \cite[Lemma 4.1]{Ball2011}, the main tool used to prove that $|S| \leq q+1$ and classify the case $|S|=q+1$, for $k \leq p$, in \cite{Ball2011}.

\begin{lemma}
Let $L$ of size $r$, $D$ of size $k-1-r$ and $\Omega$ of size $t+2$ be  pairwise disjoint subsequences of $S$. If $1 \leq r \leq t+2$ and $r\leq p-1$, where $q=p^h$, then
$$
0=\sum_{\substack{\Delta \subseteq \Omega \\ |\Delta|=r}}  P_D(\Delta,L) \prod_{z \in (\Omega \setminus \Delta)\cup (L \setminus \ell_0)} \det(z,\Delta,D)^{-1},
$$
where $\ell_0$ is the first element of $L$.
\end{lemma}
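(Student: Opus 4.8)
The plan is to establish the displayed identity by induction on $r$, running a lighter version of the argument behind Lemma~\ref{basic} (using only the lemmas of Section~\ref{lemmas}). For the base case $r=1$ we have $L\setminus\ell_0=\emptyset$ and, directly from the definition of the Segre product, $P_D(\{a\},\{\ell_0\})=T_D(a)\,T_D(\ell_0)^{-1}$, so the sum to be computed equals
$$
T_D(\ell_0)^{-1}\sum_{a\in\Omega}T_D(a)\prod_{z\in\Omega\setminus\{a\}}\det(z,a,D)^{-1},
$$
which vanishes by Lemma~\ref{interpolation} applied with $Y=D$ (of size $k-2$) and $E=\Omega$ (of size $t+2$); here $T_D(\ell_0)\neq 0$ because $\ell_0\in S\setminus D$ lies on no tangent hyperplane at $D$.

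For the inductive step assume the statement for $r-1$, for every admissible choice of the data. Fix a subset $\Gamma\subseteq\Omega$ of size $r-1$ and choose $d\in\Omega\setminus\Gamma$ (possible because $r\leq t+2$). Apply Lemma~\ref{interpolation} with $Y=D\cup\Gamma$ and $E=(L\setminus\ell_0)\cup(\Omega\setminus\Gamma)$, and multiply the resulting identity through by $P_D(\Gamma\cup\{d\},L)\,T_{D\cup\Gamma}(d)^{-1}$. As in the proof of Lemma~\ref{basic}, the relation
$$
P_D(\Gamma\cup\{d\},L)\,T_{D\cup\Gamma}(x)\,T_{D\cup\Gamma}(d)^{-1}=(-1)^{\sigma(x,L)}P_{D\cup\{x\}}(\Gamma,L\setminus\{x\}),
$$
obtained via Lemma~\ref{denominator} by moving $x$ to the last slot of $L$, and the relation $P_D(\Gamma\cup\{d\},L)\,T_{D\cup\Gamma}(y)\,T_{D\cup\Gamma}(d)^{-1}=P_D(\Gamma\cup\{y\},L)$, obtained from Lemma~\ref{switch} and Lemma~\ref{numerator}, turn the part of the identity coming from $x\in L\setminus\ell_0$ and the part coming from $y\in\Omega\setminus\Gamma$ into
$$
\sum_{x\in L\setminus\ell_0}(-1)^{\sigma(x,L)}P_{D\cup\{x\}}(\Gamma,L\setminus\{x\})\prod_{z\in(L\setminus\{\ell_0,x\})\cup(\Omega\setminus\Gamma)}\det(z,x,\Gamma,D)^{-1}
$$
and $\sum_{y\in\Omega\setminus\Gamma}P_D(\Gamma\cup\{y\},L)\prod_{z\in(\Omega\setminus(\Gamma\cup\{y\}))\cup(L\setminus\ell_0)}\det(z,y,\Gamma,D)^{-1}$, respectively. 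Each summand above is unaffected by the ordering chosen for $\Gamma$ (respectively $\Gamma\cup\{y\}$): a transposition there alters the Segre product by $(-1)^{t+1}$ by Lemma~\ref{numerator}, and alters the product of the $t+1$ accompanying determinants by $(-1)^{t+1}$ as well.

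Now sum over all $\Gamma\subseteq\Omega$ of size $r-1$. Every $r$-subset $\Delta$ of $\Omega$ is obtained as $\Gamma\cup\{y\}$ in exactly $r$ ways, so, up to an overall sign coming only from reordering $\Delta$ inside a Segre product versus inside its companion determinants, the $y$-part becomes $r\sum_{\Delta\subseteq\Omega,\,|\Delta|=r}P_D(\Delta,L)\prod_{z\in(\Omega\setminus\Delta)\cup(L\setminus\ell_0)}\det(z,\Delta,D)^{-1}$, i.e. $r$ times the sum we want to show is zero. In the $x$-part, interchange the two summations; for each fixed $x\in L\setminus\ell_0$ the inner sum over $\Gamma$ is, again up to an overall sign, precisely the statement of the lemma for $r-1$ with base $D\cup\{x\}$, with $L$ replaced by $L\setminus\{x\}$ — which still has $\ell_0$ as its first element since $x\neq\ell_0$ — and with $\Omega$ unchanged; the constraints $1\leq r-1\leq p-1$ and $r-1\leq t+2$ hold, so this inner sum vanishes by the induction hypothesis, for every $x$. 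Consequently $r$ times the desired sum is $0$, and since $1\leq r\leq p-1$ we may divide by $r$.

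The step I expect to be the main obstacle is the sign bookkeeping: keeping the $\sigma(\cdot,L)$ contributions of Lemma~\ref{denominator} straight, checking that reordering a subset inside a Segre product and inside its $t+1$ companion determinants is sign-neutral, and confirming that the reindexed inner $x$-sum is genuinely the $(r-1)$-instance of the lemma with $\ell_0$ still distinguished. Once the combinatorics is in place, the only arithmetic used — that $r$ is invertible modulo $p$, which is exactly where the hypothesis $r\leq p-1$ enters — is immediate.
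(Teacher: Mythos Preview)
Your proposal is correct and follows essentially the same route as the paper's own proof: induction on $r$, with the base case coming from Lemma~\ref{interpolation} divided by $T_D(\ell_0)$, and the inductive step obtained by applying Lemma~\ref{interpolation} with $Y=D\cup\Gamma$ and $E=(L\setminus\ell_0)\cup(\Omega\setminus\Gamma)$, multiplying through by $P_D(\Gamma\cup\{d\},L)\,T_{D\cup\Gamma}(d)^{-1}$, summing over the $(r-1)$-subsets $\Gamma\subseteq\Omega$, killing the $x$-part via the induction hypothesis at $D\cup\{x\}$, and dividing the surviving $y$-part by $r$. Your remarks on the sign bookkeeping (the $(-1)^{\sigma(x,L)}$ from Lemma~\ref{denominator}, the sign-neutrality of reordering $\Gamma$ or $\Gamma\cup\{y\}$ simultaneously in the Segre product and the $t+1$ companion determinants, and the persistence of $\ell_0$ as first element of $L\setminus\{x\}$) match the paper's handling exactly.
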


\begin{proof}
By induction on $r$. The case $r=1$ follows by dividing the equation in Lemma~\ref{interpolation}, with $E=\Omega$ and $Y=D$, by $T_D(\ell_0)$.

Fix $x \in L$ and apply the induction step to $L\setminus \{x\}$ and $\{x\} \cup D$,
$$
0=\sum_{\substack{\Delta \subseteq \Omega \\ |\Delta|=r-1}}  P_{D\cup \{x\}}(\Delta,L \setminus \{x\}) \prod_{z \in (\Omega \setminus \Delta)\cup (L \setminus \{ \ell_0,x \})} \det(z,\Delta,x,D)^{-1}.
$$
Let $\Delta$ be a subset of $\Omega$ of size $r-1$. Applying Lemma~\ref{interpolation} with $E=(\Omega \cup L)\setminus (\Delta \cup \{\ell_0\})$ and $Y=\Delta \cup D$, we get
$$
0=\sum_{x \in L \setminus \{\ell_0\}} T_{D \cup \Delta}(x) \prod_{z \in (\Omega \setminus \Delta) \cup (L \setminus \{\ell_0,x\})} \det (z,\Delta,x,D)^{-1}$$
$$
+ \sum_{y \in \Omega \setminus \Delta} T_{D \cup \Delta}(y) \prod_{z \in (\Omega \setminus (\Delta \cup \{y\})) \cup (L \setminus \{\ell_0\})} \det(z,\Delta,y,D)^{-1}.
$$
Multiplying by $P_D(\Delta \cup d,L)T_{D \cup \Delta}(d)^{-1}$
for some $d$ for which $T_{D \cup A \cup \Delta}(d) \neq 0$.
By Lemma~\ref{denominator} we can rearrange $L$ so that the last element is $x$, which changes the sign by $\sigma(x,L)$. This gives
$$
0=\sum_{x \in L \setminus \{\ell_0\}} (-1)^{\sigma(x,L)} P_{D \cup \{x\}}( \Delta,L\setminus \{x\}) \prod_{z \in (\Omega \setminus \Delta) \cup (L \setminus \{\ell_0,x\})} \det (z,\Delta,x,D)^{-1}+
$$
$$
 \sum_{y \in \Omega \setminus \Delta} P_{D}(\Delta \cup \{y\},L) \prod_{z \in (\Omega \setminus (\Delta \cup \{y\})) \cup L\setminus \{\ell_0\}} \det(z,\Delta,y,D)^{-1},
$$
since 
$$
P_D(\Delta \cup \{d\},L)T_{D \cup \Delta}(x)T_{D  \cup \Delta}(d)^{-1}=P_{D\cup \{x\}}( \Delta,L \setminus \{x\})
$$
and by Lemma~\ref{switch} (and Lemma~\ref{numerator})
$$
P_D(\Delta \cup \{d\},L)T_{D \cup \Delta}(y) T_{D  \cup \Delta} (d)^{-1}=P_D( \Delta \cup \{y\},L).
$$

Note that in the second term we can order $\Delta \cup \{y\}$ in any way we please without changing the sign since, by Lemma~\ref{numerator}, interchanging two elements of $\Delta \cup \{y\}$ in $P_D(\Delta \cup \{y\},L)$ changes the sign by $(-1)^{t+1}$, exactly the same change occurs when we interchange the same vectors in the product of determinants.

Therefore, when we sum this equation over subsets $\Delta$ of $\Omega$ of size $r-1$ and apply the induction hypothesis, the first sum is zero and the second sum gives
$$
0=r\sum_{\substack{\Delta \subseteq \Omega \\ |\Delta|=r}}  P_D(\Delta,L) \prod_{z \in (\Omega \setminus \Delta)\cup (L \setminus \{ \ell_0\})} \det(z,\Delta,D)^{-1}.
$$

\end{proof}

\bigskip

{\small Simeon Ball}  \\
{\small Departament de Matem\`atica Aplicada IV}, \\
{\small Universitat Polit\`ecnica de Catalunya, Jordi Girona 1-3},
{\small M\`odul C3, Campus Nord,}\\
{\small 08034 Barcelona, Spain} \\
{\small simeon@ma4.upc.edu }

\bigskip

{\small Jan De Beule} \\
{\small Department of Mathematics}, \\
{\small Ghent University},
{\small Krijgslaan 281}, \\
{\small 9000 Gent, Belgium}\\
{\small jdebeule@cage.ugent.be}


\begin{thebibliography}{}

\bibitem{Ball2011}
S. Ball, On sets of vectors of a finite vector space in which every subset of basis size is a basis, {\it Journal European Math. Soc.}, to appear.

\bibitem{Bush1952} K. A. Bush, Orthogonal arrays of index unity, {\it Ann. Math. Statist.}, {\bf 23} (1952) 426--434.

\bibitem{HK1996} J. W. P. Hirschfeld and G. Korchm\'aros, On the embedding of an arc into a conic in a finite plane, {\it Finite Fields Appl.}, {\bf 2} (1996) 274--292.

\bibitem{HS2001} J. W. P. Hirschfeld and L. Storme, The packing problem in statistics, coding theory and finite projective spaces: update 2001, in {\it Developments in Mathematics}, {\bf 3}, Kluwer Academic Publishers. {\it Finite Geometries}, Proceedings of the {\it Fourth Isle of Thorns Conference}, pp. 201--246.

\bibitem{HT1991} J. W. P. Hirschfeld and J. A. Thas, {\it General Galois Geometries}, Clarendon Press, Oxford, 1991.

\bibitem{MS1977}
F. J. MacWilliams and N. J. A. Sloane,
{\it The Theory of Error-Correcting Codes},
North-Holland, 1977.

\bibitem{Oxley1992} J. Oxley, {\it Matroid Theory}, Oxford University Press, New York, 1992.

\bibitem{Segre1955a} B. Segre, Ovals in a finite projective plane, {\it Canad. J. Math.}, {\bf 7} (1955) 414--416.

\bibitem{Segre1955b} B. Segre, Curve razionali normali e $k$-archi negli spazi finiti, {\it Ann. Mat. Pura Appl.}, {\bf 39} (1955) 357--379.

\bibitem{Segre1967} B. Segre, Introduction to Galois geometries, {\it Atti Accad. Naz. Lincei Mem.}, {\bf 8} (1967) 133--236.

\bibitem{Voloch1991} J. F. Voloch, Complete arcs in Galois planes of non-square order, in: {\it Advances in Finite Geometries and Designs}, Oxford University Press, Oxford, 1991, pp. 401--406.

\end{thebibliography}
\end{document}